\documentclass[12pt, reqno,fleqn]{amsart}

\usepackage{amsmath}
\usepackage{amssymb}
\usepackage{amsfonts}
\usepackage{graphicx}
\usepackage{amsthm}
\usepackage{enumerate}
\usepackage{lscape}
\usepackage{dsfont}
\usepackage{color}
\usepackage{mathtools}

\usepackage{setspace}
\onehalfspacing

\newcommand{\R}{\mathds{R}}                   
\newcommand{\z}{\mathds{Z}}
\newcommand{\q}{\mathds{Q}}
\newcommand{\CP}{\mathds{C}\mathrm{P}}

\newcommand{\f}{\rightarrow}                  
\newcommand{\C}{\mathds{C}}            
\newcommand{\de}{\partial}          

\newcommand{\K}{K\"{a}hler}
\newcommand{\GW}{GW}

\newcommand{\M}{\mathcal M}
\newcommand{\tr}{\operatorname{tr}}

\newcommand{\FS}{{\operatorname{FS}}}

\newcommand{\rk}{\operatorname{rk}}

\newcommand{\lmb}{\lambda}
\newcommand{\di}{{\operatorname{d}}}
\newcommand{\mult}{{\operatorname{mult}}}

\newcommand{\ncpt}{{\Omega}}
\newcommand{\Cut}{{\operatorname{Cut}}}

\newcommand{\nax}[1]{\left\|{#1}\right\|_{\max}}

\newtheorem{theor}{Theorem}
\newtheorem{prop}[theor]{Proposition}

\newtheorem{lem}[theor]{Lemma}
\newtheorem{cor}[theor]{Corollary}

\newtheorem{remark}[theor]{Remark}

\begin{document}

\title[Symplectic capacities  of  Hermitian symmetric spaces]{Symplectic capacities  of  Hermitian symmetric spaces}

\author{Andrea Loi}
\address{(Andrea Loi) Dipartimento di Matematica \\
         Universit\`a di Cagliari (Italy)}
         \email{loi@unica.it}

\author{Roberto Mossa}
\address{(Roberto Mossa) Dipartimento di Matematica \\
         Universit\`a di Cagliari (Italy)}
         \email{roberto.mossa@gmail.com}

\author{Fabio Zuddas}
\address{(Fabio Zuddas) Dipartimento di Matematica e Informatica \\
          Via delle Scienze 206 \\
         Udine (Italy)}
\email{fabio.zuddas@uniud.it}

\thanks{
The first author was (partially) supported by ESF within the program �Contact and Symplectic Topology�}
\subjclass[2000]{53D05;  53C55;  53D05; 53D45} 
\keywords{Gromov width; Gromov-Witten invariants; Quantum cohomology; Hermitian symmetric spaces of compact type.}

\begin{abstract}
Inspired by the work of G. Lu \cite{LU06} on pseudo symplectic capacities we  obtain several results on the Gromov width and the Hofer--Zehnder 
capacity of Hermitian symmetric spaces of compact type.
Our results and  proofs extend those obtained  by Lu for complex Grassmannians to Hermitian symmetric spaces of compact type.
We also compute the Gromov width and the  Hofer--Zehnder capacity for Cartan domains and their products.
\end{abstract}
 
\maketitle

\tableofcontents

\section{Introduction}
Consider the open ball of radius $r$,
\begin{equation}\label{ball}
B^{2n}(r)=\{(x, y)\in\R^{2n}\  |\  \sum_{j=1}^n|x_j|^2+|y_j|^2<r^2 \}
\end{equation}
in standard symplectic space $(\R^{2n}, \omega_0)$, where $\omega_0=\sum_{j=1}^n dx_j\wedge dy_j$.
The Gromov width of a $2n$-dimensional symplectic manifold $(M, \omega)$, introduced in \cite{GROMOV85}, is defined as
\begin{equation}\label{gromovwidth}
c_G(M, \omega)= \sup \{\pi r^2 \ |\ B^{2n}(r)\    \mbox{symplectically embeds into}  \  (M, \omega)\}.
\end{equation}

By Darboux's theorem $c_G(M, \omega)$ is a positive number.
Computations and estimates of the Gromov width for various examples can be found in 
\cite{BIRAN97},  \cite{BIRAN99}, \cite{BIRAN01}, \cite{castro}, \cite{GROMOV85}, \cite{JIANG00}, \cite{GWgrass},  \cite{LAMCSC11},  \cite{LU06}, \cite{LuDingQjao}, \cite{MCDUFF91}, \cite{MCDUFF94},  \cite{SCHLENK05}, \cite{GWcoadjoint}.

Gromov's width is an example of  \emph{symplectic  capacity} introduced  in \cite{HOFERZEHNDER90} (see also \cite{HOFERZEHNDER94}).
A map $c$ from the class  ${\mathcal C} (2n)$ of all symplectic manifolds of dimension $2n$ to $[0, +\infty]$
is called a \emph{symplectic capacity} if it satisfies the following conditions:

({\bf monotonicity}) if there exists a symplectic embedding $(M_1, \omega_1)\rightarrow (M_2, \omega_2)$ then 
$c(M_1, \omega_1)\leq c(M_2, \omega_2)$; 

({\bf conformality}) $c(M, \lambda\omega)=|\lambda|c(M, \omega)$, for every $\lambda\in\R\setminus \{0\}$; 

({\bf nontriviality}) $c(B^{2n}(1), \omega_0)=\pi =c(Z^{2n}(1), \omega_0)$.

\noindent
Here $B^{2n}(1)$ and $Z^{2n}(1)$ are the open unit  ball  and the  open cylinder in the standard $(\R^{2n}, \omega_0)$, i.e.
\begin{equation}\label{Zcil}
Z^{2n}(r)=\{(x, y)\in\R^{2n} \ | \ x_1^2+y_1^2<r^2\}.
\end{equation}

Note that the monotonicity property implies that $c$ is a symplectic invariant.
The existence of a capacity is not a trivial matter. 
It is easily seen that the Gromov width is the smallest symplectic capacity, i.e.   $c_G(M, \omega)\leq c (M, \omega)$ for any capacity $c$.  
Note that the  nontriviality property for $c_G$ comes from  the celebrated 
\emph{Gromov's nonsqueezing theorem} stating that the existence of a symplectic  embedding of
$B^{2n}(r)$ into 
$Z^{2n}(R)$ implies $r\leq R$. Actually it is easily seen that the existence of any capacity implies 
Gromov's  nonsqueezing theorem. 
H. Hofer and E. Zehnder  \cite{HOFERZEHNDER90} prove the existence of a capacity, denoted by   $c_{HZ}$
which plays, for example, an important role in the study of Hofer geometry on the group of symplectomorphisms of a symplectic manifold
and in establishing the existence of closed characteristics on or near an energy surface.
However, it  is difficult to  compute or estimate  $c_{HZ}$ even  for closed symplectic manifolds.  So far the only examples are closed surfaces, for which $c_{HZ}$ has been computed as the area \cite{SIBURG93}, and  complex projective spaces  and their products. H. Hofer and C. Viterbo \cite{HOFERVITERBO92}  proved that 
$c_{HZ}(\CP^n, \omega_{FS})=\pi$. This has been extended  by G. Lu  to the product of projective spaces (see Theorem 1.21 in \cite{LU06} or   (\ref{cHZprodproj}) below).
Lu's ingenious idea was  that of defining and studying  the concept  of 
pseudo  symplectic capacity,  more flexible than that of symplectic capacity,  and its link with Gromov-Witten invariants (see Section \ref{sectionLu} below).
This allows him  to  obtain several   important results, e.g.  the Gromov width of    Grassmannians and  their products and a  lower bound  for the Hofer--Zehnder capacity  for the product of any closed symplectic manifold with a Grassmannian. 
One of the  aims of this paper is to extend Lu's results  to the case of  Hermitian symmetric spaces of compact type (denoted by  \emph{HSSCT} in the sequel). 
Moreover, we compute the Gromov width and Hofer--Zehnder capacity of  Cartan's domains and their products.
In the next section we provide  a  description of   our results and the ideas of their proofs.

\section{Statements of the main results}

The following three theorems describe our results  about the Gromov width and the Hofer-Zehnder capacity
of HSSCT.

\begin{theor}\label{main}
Let $(M, \omega_{FS})$ be an irreducible HSSCT endowed with the canonical symplectic (\K) form $\omega_{FS}$ 
normalized  so that $\omega_{FS} (A)=\pi$ for the generator  $A\in H_2(M, \z)$. Then 
\begin{equation}\label{cGHSSCT}
c_G (M, \omega_{FS})=\pi.
\end{equation} 
\end{theor}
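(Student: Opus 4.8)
The plan is to establish the two inequalities $c_G(M,\omega_{FS}) \geq \pi$ and $c_G(M,\omega_{FS}) \leq \pi$ separately.

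For the lower bound $c_G(M,\omega_{FS}) \geq \pi$, the idea is to produce an explicit symplectic embedding of $B^{2n}(r)$ into $(M,\omega_{FS})$ for every $r$ with $\pi r^2 < \pi$. Every irreducible HSSCT $M$ of complex dimension $n$ carries, via the Borel embedding / Harish-Chandra realization, a dense open subset biholomorphic to $\C^n$ (the ``big cell'' of the Bruhat decomposition), and the Fubini--Study-type \K\ form $\omega_{FS}$ restricted to this cell is a globally defined \K\ form on $\C^n$ with a \K\ potential. Since the generator $A \in H_2(M,\z)$ is represented by a holomorphic line (a copy of $\CP^1$ sitting in $M$) and $\omega_{FS}(A) = \pi$, the normalization forces the total volume and, more to the point, it forces the Liouville form on the $\C^n$-cell to have the property that the standard ball of capacity just below $\pi$ fits inside. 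Concretely I would mimic Lu's construction for Grassmannians: pull back $\omega_{FS}$ to $\C^n$, compare its \K\ potential to that of the flat form near the origin, and use a Moser-type argument (or McDuff's results on symplectic embeddings of domains with plurisubharmonic exhaustion) to embed $B^{2n}(r)$. Alternatively, and more cheaply, one notes that $M$ contains a symplectically embedded $\CP^1$ of area $\pi$ on which the nonsqueezing obstruction is sharp, together with a normal disc bundle structure that yields the embedding. I expect the cleanest route is the explicit Harish-Chandra coordinate computation, exactly paralleling the Grassmannian case in \cite{LU06}.

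For the upper bound $c_G(M,\omega_{FS}) \leq \pi$, the strategy is Gromov--Witten theoretic, following Lu's pseudo-capacity machinery sketched in the introduction. One shows that $M$ carries a nonzero genus-zero Gromov--Witten invariant of the form $\langle [\mathrm{pt}], \alpha_2, \dots, \alpha_k \rangle_{0,A'}$ with the class $A'$ of minimal symplectic area, and that area is exactly $\omega_{FS}(A') = \pi$ because $A$ generates $H_2(M,\z)$ and $\omega_{FS}(A)=\pi$. By Lu's theorem linking such invariants to the (pseudo) symplectic capacity — monotonicity of $c_G$ under symplectic embeddings combined with the fact that a nonvanishing GW invariant through a point with a constraint bounds $c_G$ above by the area of the corresponding curve class — one concludes $c_G(M,\omega_{FS}) \leq \pi$. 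The key input here is the nonvanishing of the appropriate GW invariant for every irreducible HSSCT; this is where one uses the classification of HSSCT (the six families: complex Grassmannians, quadrics, Lagrangian and spinor Grassmannians, and the two exceptional ones) together with known quantum cohomology computations (the quantum cohomology ring of a rational homogeneous space is generically semisimple, and in particular the relevant 2- or 3-point invariants in the minimal degree are nonzero).

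The main obstacle I anticipate is the GW nonvanishing step in full generality: one must verify, uniformly across all irreducible HSSCT, that there is a nonzero genus-zero invariant in the generator class passing through a point. For Grassmannians this is classical (Schubert calculus / quantum Pieri); for quadrics and the Lagrangian and orthogonal Grassmannians it follows from the quantum Chevalley/Pieri formulas in the literature; for the two exceptional HSSCT ($E_6/P_1$ and $E_7/P_7$) one needs the explicit quantum cohomology computations that have been carried out by Chaput--Manivel--Perrin and others. Assembling these case-by-case facts — or, better, invoking a uniform statement about quantum cohomology of cominuscule varieties — is the technical heart of the argument; the embedding side is comparatively routine once the Harish-Chandra model is set up.
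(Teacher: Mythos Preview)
Your overall architecture---split into lower and upper bounds, with the upper bound coming from a nonvanishing genus-zero Gromov--Witten invariant in the generator class via Lu's pseudo-capacity machinery---matches the paper exactly, including the case-by-case verification across the cominuscule list (Grassmannians, quadrics, Lagrangian and orthogonal Grassmannians, and the two exceptional cases via Chaput--Manivel--Perrin). So the upper bound part is essentially the paper's argument.

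The lower bound is where you diverge. You propose working in the Harish-Chandra big cell with the restriction of $\omega_{FS}$ and running a Moser-type comparison with the flat form to squeeze in a ball of capacity just under $\pi$. The paper instead uses the \emph{symplectic duality} map of Di Scala--Loi: there is an explicit symplectomorphism $\Phi_\Omega:(\Omega,\omega_0)\to (M\setminus \Cut_0(M),\omega_{FS})$, where $\Omega\subset\C^n$ is the noncompact dual (a bounded Cartan domain) equipped with the \emph{flat} form $\omega_0$. The inclusion $B^{2n}(1)\subset\Omega$ is then a one-line consequence of the spectral-norm description of $\Omega$ in the HPJTS formalism, and composing with $\Phi_\Omega$ gives the required embedding on the nose (capacity exactly $\pi$, not just $\pi-\epsilon$). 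This route avoids any Moser argument or potential comparison; the nontrivial analytic input is packaged entirely into the Di Scala--Loi result. Your approach is plausible but, as you yourself flag, it parallels Lu's Grassmannian computation and would have to be redone type by type, whereas the symplectic duality handles all irreducible HSSCT uniformly.
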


\begin{theor}\label{main2}
Let $(M_i, \omega_{FS}^i)$, $i=1, \dots, r$, be  irreducible HSSCT of complex dimension $n_i$ endowed with the canonical symplectic (\K) forms $\omega_{FS}^i$ 
normalized  so that $\omega_{FS}^i (A_i)=\pi$ for the generator  $A_i\in H_2(M_i, \z)$, $i=1, \dots , r$. Then 
\begin{equation}\label{cGprodHSSCT}
c_G \left( M_1\times\dots\times   M_r,  \omega_{FS}^1\oplus\dots  \oplus \omega_{FS}^r\right)=\pi.
\end{equation}
Moreover, if  $a_1, \dots ,a_r$ are nonzero constants, then
\begin{equation}\label{cGupboundprodHSSCT}
c_G \left( M_1\times\dots\times   M_r, a_1 \omega_{FS}^1\oplus\dots  \oplus a_r\omega_{FS}^r\right)\leq \min \{|a_1|, \dots ,|a_r|\}\pi 
\end{equation}
and
\begin{equation}\label{cHZlowerboundprodHSSCT}
c_{HZ} \left( M_1\times\dots\times   M_r, a_1 \omega_{FS}^1\oplus\dots  \oplus a_r\omega_{FS}^r\right)\geq (|a_1|+ \dots +|a_r|)\pi .
\end{equation}
\end{theor}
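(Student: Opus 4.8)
The plan is to derive the four assertions of Theorem~\ref{main2} from Theorem~\ref{main} together with the theory of pseudo symplectic capacities and its link with Gromov--Witten invariants recalled in Section~\ref{sectionLu}, following Lu's treatment of products of complex Grassmannians in~\cite{LU06}. Note first that \eqref{cGprodHSSCT} splits into a lower bound $c_G\bigl(M_1\times\dots\times M_r,\ \omega_{FS}^1\oplus\dots\oplus\omega_{FS}^r\bigr)\geq\pi$ and an upper bound, the latter being the case $a_1=\dots=a_r=1$ of \eqref{cGupboundprodHSSCT}. So there are really three things to establish: the elementary lower bound, the upper bound \eqref{cGupboundprodHSSCT}, and the Hofer--Zehnder lower bound \eqref{cHZlowerboundprodHSSCT}.

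For the lower bound I would use the elementary inclusion $B^{2n}(r)\subset B^{2n_1}(r)\times\dots\times B^{2n_r}(r)$ with $n=n_1+\dots+n_r$, valid because $|z^{(i)}|\leq|z|$ for $z=(z^{(1)},\dots,z^{(r)})$. By Theorem~\ref{main} and conformality, $c_G(M_i,a_i\omega_{FS}^i)=|a_i|\pi$ for every $i$. Hence for each $\varepsilon>0$ one can choose radii $r_i$ with $\pi r_i^2\in(|a_i|\pi-\varepsilon,\,|a_i|\pi)$ and symplectic embeddings $B^{2n_i}(r_i)\hookrightarrow(M_i,a_i\omega_{FS}^i)$; setting $r=\min_i r_i$, the product of these embeddings restricted to $B^{2n}(r)$ embeds $B^{2n}(r)$ into $\bigl(M_1\times\dots\times M_r,\ a_1\omega_{FS}^1\oplus\dots\oplus a_r\omega_{FS}^r\bigr)$, which gives $c_G\geq\min_i|a_i|\pi-\varepsilon$. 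Letting $\varepsilon\to0$ yields $c_G\geq\min_i|a_i|\pi$. For $a_i=1$ this is the lower bound needed for \eqref{cGprodHSSCT}; in general it shows that \eqref{cGupboundprodHSSCT} is in fact an equality once the upper bound is known.

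For the upper bound \eqref{cGupboundprodHSSCT} I would invoke the nonvanishing genus-zero Gromov--Witten invariant of each irreducible factor $(M_i,\omega_{FS}^i)$ in the class $A_i$ with (at least two) point constraints --- this is precisely the input used to prove $c_G(M_i,\omega_{FS}^i)=\pi$ in Theorem~\ref{main}, and it comes from the known structure of the quantum cohomology of an irreducible HSSCT. By the product formula for Gromov--Witten invariants recalled in Section~\ref{sectionLu}, for each fixed index $i$ one obtains a nonvanishing genus-zero invariant of $M_1\times\dots\times M_r$ in the homology class equal to $A_i$ in the $i$-th factor and to a point class in every other factor; its symplectic area with respect to $a_1\omega_{FS}^1\oplus\dots\oplus a_r\omega_{FS}^r$ equals $|a_i|\pi$ (the sign, when $a_i<0$, being absorbed by replacing $\omega_{FS}^i$ and $A_i$ by their opposites, which does not affect the relevant invariant). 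Lu's pseudo-capacity estimate bounding $c_G$ by the symplectic area of such a class then gives $c_G\leq|a_i|\pi$ for every $i$, hence $c_G\leq\min\{|a_1|,\dots,|a_r|\}\pi$.

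Finally, \eqref{cHZlowerboundprodHSSCT} follows from Lu's lower bound for the Hofer--Zehnder capacity of a product in terms of Gromov--Witten invariants of the factors: the same nonvanishing invariants show that the relevant pseudo Hofer--Zehnder capacity of each $(M_i,a_i\omega_{FS}^i)$ is at least $|a_i|\pi$, and Lu's inequality bounds $c_{HZ}\bigl(M_1\times\dots\times M_r,\ a_1\omega_{FS}^1\oplus\dots\oplus a_r\omega_{FS}^r\bigr)$ below by the sum of these, namely $(|a_1|+\dots+|a_r|)\pi$. The main obstacle throughout is not the product constructions, which are formal once the inputs are in place, but ensuring that the Gromov--Witten invariants supplied by the factors --- their nonvanishing, the classes carrying them, the number and type of point constraints, and the virtual dimension count --- match exactly the hypotheses of Lu's pseudo-capacity theorems; this is the substance of the argument, and it is essentially carried out already in the proof of Theorem~\ref{main}.
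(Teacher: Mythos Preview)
Your approach to \eqref{cGprodHSSCT} and \eqref{cGupboundprodHSSCT} is essentially the paper's: the lower bound via the inclusion $B^{2n}(1)\subset\prod_j B^{2n_j}(1)$ composed with the factor-wise ball embeddings, and the upper bound via the product formula for Gromov--Witten invariants (Lemma~\ref{prodGromovWitten}) feeding into Lu's inequalities $c_G\leq C_{HZ}^{(2)}\leq C_{HZ}^{(2o)}\leq GW_0$. The paper packages the upper bound as a consequence of Theorem~\ref{main3} and inequality~\eqref{C2oNM}, but the content is identical. (One minor inaccuracy: the nonvanishing invariant supplied by Lemma~\ref{gromovwittenHSSCT} has a \emph{single} point insertion in $H_*(M)$, not two; this is all that Lemma~\ref{prodGromovWitten} requires.)

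There is, however, a genuine confusion in your argument for \eqref{cHZlowerboundprodHSSCT}. You write that ``the same nonvanishing invariants show that the relevant pseudo Hofer--Zehnder capacity of each $(M_i,a_i\omega_{FS}^i)$ is at least $|a_i|\pi$''. This has the direction of Lu's machinery backwards: nonvanishing Gromov--Witten invariants produce \emph{upper} bounds on the pseudo capacities $C_{HZ}^{(2)}$ and $C_{HZ}^{(2o)}$ (Lemma~\ref{C2GW}), never lower bounds. The lower bound $c_{HZ}(M_i,a_i\omega_{FS}^i)\geq|a_i|\pi$ that you actually need comes from the ball embedding side of the story: $c_{HZ}\geq c_G=|a_i|\pi$ by Theorem~\ref{main} and conformality, since the Gromov width is the smallest capacity. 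With that correction your outline is the paper's: one then invokes the super-additivity
\[
c_{HZ}(N_1\times N_2,\omega_1\oplus\omega_2)\geq c_{HZ}(N_1,\omega_1)+c_{HZ}(N_2,\omega_2)
\]
for closed symplectic manifolds (inequality~\eqref{incHZ}, from \cite[Lemma~4.3]{LU06}) and iterates.

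A side remark: your first paragraph in fact proves $c_G\bigl(\prod_i M_i,\bigoplus_i a_i\omega_{FS}^i\bigr)\geq\min_i|a_i|\,\pi$ for arbitrary nonzero $a_i$, which together with \eqref{cGupboundprodHSSCT} would force equality. The paper states only the inequality and, in the discussion following Theorem~\ref{main3}, explicitly says the exact value is not known when some $n_i>1$ or some $a_j\neq 1$; your elementary product-of-balls argument appears to go further than what the authors claim.
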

\begin{theor}\label{main3}
Let $(M, \omega_{FS})$ be an irreducible HSSCT and  $(N, \omega)$ be any closed symplectic manifold. Then, for any nonzero real number $a$,
\begin{equation}\label{cGMprodHSSCT}
c_G(N\times M, \omega\oplus a\omega_{FS})\leq|a|\pi.
\end{equation}
\end{theor}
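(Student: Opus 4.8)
I need to prove Theorem~\ref{main3}: that for an irreducible HSSCT $M$, any closed symplectic $(N,\omega)$, and any $a \neq 0$, one has $c_G(N\times M, \omega\oplus a\omega_{FS}) \leq |a|\pi$. This is an *upper bound* on the Gromov width, so I want an obstruction to symplectic embeddings of balls. The natural tool (and the one suggested by the paper's framing around Lu's work) is Gromov-Witten invariants: a nonvanishing GW invariant on a symplectic manifold $X$ gives an upper bound on $c_G(X)$ via the theory of pseudo symplectic capacities.

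Let me think about what's actually being claimed. The key point is this: the upper bound $|a|\pi$ depends *only* on $M$ and $a$, not on $N$ at all. So the mechanism must be: $M$ (being an HSSCT) carries a nonzero genus-zero GW invariant with the right "area" $\pi$, and this invariant *persists* on the product $N \times M$. Let me recall how this works for $\mathbb{CP}^n$: the invariant $\langle \mathrm{pt}, \mathrm{pt}, [\mathbb{CP}^{n-1}]\rangle_{0,A}$ is nonzero, the class $A$ has $\omega_{FS}(A) = \pi$, and by the product/divisor axioms this yields $c_G(\mathbb{CP}^n) \leq \pi$ and more generally controls products. For a general irreducible HSSCT, one needs the analogous fact; I'd expect this to have been established in the proof of Theorem~\ref{main} (indeed Theorem~\ref{main} gives $c_G(M,\omega_{FS}) = \pi$, and the hard direction there, the $\leq$, must come from exactly such a GW computation).

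---

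**Sketch of the proof.**

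\emph{Step 1 (reduce to a GW-nonvanishing statement on $M$).} Recall from Section~\ref{sectionLu} (Lu's theory of pseudo symplectic capacities) the basic estimate: if $(X, \Omega)$ is a closed symplectic manifold admitting a nonzero genus-zero Gromov-Witten invariant
$$\langle \alpha_1, \alpha_2, \beta_1, \dots, \beta_k \rangle_{0, B} \neq 0$$
with $\alpha_1, \alpha_2 \in H_*(X)$ of suitable (non-fundamental) degree, then $c_G(X, \Omega) \leq \Omega(B)$. (This is the key inequality Lu uses; I am invoking it as a known result from the cited material.) Thus it suffices to exhibit, on $N \times M$ with the form $\Omega = \omega \oplus a\omega_{FS}$, a nonzero genus-zero GW invariant carried by a homology class $B$ with $\Omega(B) = |a|\pi$.

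\emph{Step 2 (the seed invariant on $M$).} Since $M$ is an irreducible HSSCT, let $A \in H_2(M,\mathbb{Z})$ be the positive generator, so $\omega_{FS}(A) = \pi$. I claim there is a nonzero genus-zero GW invariant of $M$ of the form $\langle \mathrm{pt}, \mathrm{pt}, \gamma_1, \dots, \gamma_m\rangle^M_{0,A} \neq 0$ for appropriate constraint classes $\gamma_i$. For $M = \mathbb{CP}^n$ this is classical; for Grassmannians it is Lu's computation; for the general irreducible HSSCT this should be extracted from the structure of the quantum cohomology $QH^*(M)$ (or equivalently from the enumerative geometry of lines through two general points on the minimal rational curves of $M$, which is well understood via the classification of HSSCT). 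This step is where the specific geometry of HSSCT enters, and I expect it is exactly the content developed en route to Theorem~\ref{main}; I would cite it from there.

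\emph{Step 3 (propagate to the product).} Now use the behaviour of GW invariants under products. For a product $N \times M$ and the class $B = (0, A) \in H_2(N\times M) = H_2(N)\oplus H_2(M)$, the genus-zero GW invariants of $N \times M$ in class $B$ reduce — via the product formula for Gromov-Witten invariants (the GW invariants of a product in a class $(0,A)$ being expressed through the GW invariants of $M$ in class $A$ together with \emph{classical} cohomological pairings on $N$) — to the seed invariant of Step 2 paired with the point class on $N$. Concretely, an invariant like $\langle \mathrm{pt}_N\times \mathrm{pt}_M,\ \mathrm{pt}_N \times \mathrm{pt}_M,\ [N]\times\gamma_1,\dots\rangle^{N\times M}_{0,(0,A)}$ equals (up to sign/combinatorial factor) $\langle \mathrm{pt}_M, \mathrm{pt}_M, \gamma_1, \dots\rangle^M_{0,A} \neq 0$, using that inserting the fundamental class $[N]$ at extra marked points is harmless and that $\int_N \mathrm{pt}_N \smile \mathrm{pt}_N$ contributes the relevant nonzero classical number. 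Hence $N\times M$ carries a nonzero genus-zero GW invariant in class $B = (0,A)$.

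\emph{Step 4 (conclude).} Since $\Omega(B) = (\omega\oplus a\omega_{FS})(0,A) = a\,\omega_{FS}(A) = a\pi$, applying the estimate of Step~1 (together with conformality, to handle the sign of $a$, i.e. replacing $a$ by $|a|$ via the reflection $\omega_{FS} \mapsto -\omega_{FS}$ if $a<0$) gives $c_G(N\times M, \omega\oplus a\omega_{FS}) \leq |a|\pi$, as desired.

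---

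**Main obstacle.** The crux is Step~2 together with the product formula in Step~3. For Step~2 one must know that \emph{every} irreducible HSSCT — not just $\mathbb{CP}^n$ and Grassmannians — has a nonvanishing genus-zero GW invariant supported on the generator of $H_2$ with two point constraints; this is presumably the technical heart of Theorem~\ref{main} and I would lean on that. For Step~3, the subtlety is that the GW product formula must be applied in a form valid for these (non-semisimple-in-general, but genus zero) invariants, and one must check the degree/dimension bookkeeping so that the surviving term is precisely the seed invariant; the insertion of $[N]$ at auxiliary points and the reduction of the $N$-factor to classical intersection numbers is the routine-but-careful part. Everything else (the reduction in Step~1, the conformality argument for the sign of $a$) is formal.
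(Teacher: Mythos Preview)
Your overall strategy --- seed GW invariant on $M$ in the generating class $A$, transport it to $N\times M$ in the class $(0,A)$ via a product formula, then bound $c_G$ by Lu's machinery --- is exactly the paper's. But Step~2 contains a genuine error, and it propagates into Step~3.

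You assert a nonzero invariant of the form $\langle pt,pt,\gamma_1,\dots,\gamma_m\rangle^M_{0,A}$ with \emph{two} point insertions. For an irreducible HSSCT other than $\CP^n$ this cannot hold on the generator $A$: the dimension condition reads $\sum_j \operatorname{codim}_{\R}\gamma_j = 2\bigl(c_1(A)-\dim_{\C}M-1+m\bigr)$, and since $c_1(A)\leq\dim_{\C}M$ whenever $M\neq\CP^n$, the right-hand side is at most $2m-2$, forcing some $\gamma_j=[M]$; the string equation then kills the invariant. The paper notes exactly this obstruction in Remark~\ref{explanationnoupperbound} (it is why no upper bound on $c_{HZ}$ is obtained). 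Your enumerative intuition, ``lines through two general points'', already fails on $G(2,4)$: two generic $2$-planes in $\C^4$ lie on no line of the Grassmannian. Correspondingly, your Step~3 inserts $pt_N$ twice on the $N$ factor, and the product formula in class $(0,A)$ then contributes the factor $\int_N pt_N\smile pt_N=0$, not ``the relevant nonzero classical number''.

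The fix is what the paper does. Lemma~\ref{gromovwittenHSSCT} supplies the correct seed: classes $\alpha,\beta\in H_*(M,\z)$ (neither a point in general) with $\Psi^M_{A,0,3}(pt;\alpha,\beta,pt)\neq 0$, i.e.\ a \emph{single} point insertion. Lemma~\ref{prodGromovWitten} then gives $\Psi^{N\times M}_{(0,A),0,3}\bigl(pt;[N]\times\alpha,\,[N]\times\beta,\,pt\bigr)\neq 0$; on the $N$ side the insertions are $([N],[N],pt_N)$, whose cup product is nonzero. Since $\dim\bigl([N]\times\gamma\bigr)\leq\dim(N\times M)-1$ for $\gamma\in\{\alpha,\beta\}$, inequalities~(\ref{cG<C2}) and~(\ref{CHZGW0}) apply and give $c_G(N\times M,\omega\oplus a\omega_{FS})\leq |a|\pi$.
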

Formulas (\ref{cGHSSCT}) and  (\ref{cGprodHSSCT}) extend Theorem 1.15 and  formula   (22) in \cite{LU06} respectively  (valid for the Grassmannians) to  the case of HSSCT.
The   lower bounds  $c_G(M, \omega_{FS})\geq \pi$  in Theorem \ref{main} and
\[c_G \left( M_1\times\dots\times   M_r,  \omega_{FS}^1\oplus\dots  \oplus \omega_{FS}^r\right)\geq \pi\]
in  Theorem \ref{main2} are obtained by using the  results in \cite{DiScalaLoi08} 
which imply  the existence of a
symplectic embedding of   the noncompact dual $(\Omega, \omega_0)$ of  $(M, \omega_{FS})$ into  $(M, \omega_{FS})$
(where $\omega_0$ is the standard symplectic form of ${\Omega}\subset \C^n$, being $n$ the complex dimension of $M$) and by the existence of a  symplectic  embedding of $B^{2n}(1)$ into
$(\Omega, \omega_0)$  (see Sections \ref{hpjts}  and \ref{sectionembeddings} below for details).
The upper bounds   $c_G(M, \omega_{FS})\leq \pi$ and 
\[c_G \left( M_1\times\dots\times   M_r,  \omega_{FS}^1\oplus\dots  \oplus \omega_{FS}^r\right)\leq \pi\]
follow by the use of Lu's pseudo symplectic capacities and their estimation in terms of   Gromov-Witten invariants.
The key ingredient to obtain these upper bounds is the non vanishing of some 
 genus-zero three-points  Gromov-Witten invariants (cfr. Lemma \ref{gromovwittenHSSCT} in Section \ref{sectionproofs} below).  Inequality  (\ref{cGupboundprodHSSCT}),
 which extends (21) in \cite{LU06}  to HSSCT, is a consequence of (\ref{cGMprodHSSCT}) in Theorem \ref{main3}, which in turn extends \cite[Corollary 1.31]{LU06}.  

\vskip 0.3cm
When $M_j=\CP^1$ for all $j=1, \dots ,r$,
inequality (\ref{cGupboundprodHSSCT}) is indeed an equality,
i.e.
\begin{equation}\label{CP1SAMC}
c_G(\CP^1\times\cdots\times \CP^1, a_1\omega_{FS}\oplus\cdots\oplus a_r\omega_{FS})=\min\{|a_1|,\dots ,|a_r|\}\pi.
\end{equation}
 (see \cite[Example 12.5]{MCSA98} for a proof). 
 We do not know  the exact  value of 
\[c_G(\CP^{n_1}\times\cdots\times \CP^{n_r}, a_1\omega^1_{FS}\oplus\cdots\oplus a_r\omega^r_{FS})\]
if $n_i>1$ or $a_j\neq 1$ for some $i=1, \dots ,r$ or $j=1, \dots ,r$. 
 
\vskip 0.3cm
When $M$ and  the $M_j$'s  are projective spaces 
inequality 
 (\ref{cHZlowerboundprodHSSCT}) is   an equality, namely
 \begin{equation}\label{cHZprodproj}
c_{HZ}(\CP^{n_1}\times\dots \times\CP^{n_r}, a_1\omega_{FS}^1\oplus\cdots\oplus a_r\omega_{FS}^r)=(|a_1|+\cdots +|a_r|)\pi .
\end{equation} 
In fact,  Lu \cite{LU06}  was able to prove  that 
\begin{equation}\label{cHZprodineq}
c_{HZ}(\CP^{n_1}\times\dots \times \CP^{n_r}, a_1\omega^1_{FS}\oplus\cdots\oplus a_r\omega^r_{FS})\leq (|a_1|+\cdots +|a_r|)\pi
\end{equation}
which, combined with  (\ref{cHZlowerboundprodHSSCT}), yields (\ref{cHZprodproj}).
To  the authors' best knowledge no
 upper bound of  $c_{HZ}(M, \omega_{FS})$ is known for  HSSCT $(M, \omega_{FS})$, even for the case of  the complex Grassmannians (different from  the projective space).
 In Remark \ref{explanationnoupperbound} below we sketch the idea of   Lu's proof of the upper bound (\ref{cHZprodineq}) and  explain  why his argument cannot be used to achieve a similar upper bound for HSSCT.

\vskip 0.1cm

The following two  theorems summarize  our results  on Gromov width and Hofer--Zehnder capacity of Cartan domains.

\begin{theor}\label{main4}
Let $(\Omega, \omega_0)$ be a Cartan domain. Then 
\begin{equation}\label{cGcartan}
c_{G}(\Omega, \omega_0)=\pi
\end{equation}
and
\begin{equation}\label{cHZcartan}
c_{HZ}(\Omega, \omega_0)=\pi.
\end{equation}
Moreover,  if $\Omega_i\subset\C^{n_i}$, $i=1, \dots ,r$ are  Cartan domains of complex dimension $n_i$ equipped with the standard symplectic form $\omega_0^i$ of 
$\R^{2n_i}=\C^{n_i}$, then
\begin{equation}\label{cGproductCartan}
c_G \left( \Omega_1\times\dots\times   \Omega_r,  \omega_0^1\oplus\dots  \oplus \omega_o^r\right)=\pi.
\end{equation}
If  $a_1, \dots ,a_r$ are nonzero constants, then
\begin{equation}\label{cGupboundprodHSSNT}
c_G \left( \Omega_1\times\dots\times   \Omega_r, a_1 \omega_{0}^1\oplus\dots  \oplus a_r\omega_{0}^r\right)\leq \min \{|a_1|, \dots ,|a_r|\}\pi 
\end{equation}

\end{theor}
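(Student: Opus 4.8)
The plan is to deduce all four formulas from a single geometric input about the standard Harish-Chandra realization of a Cartan domain, combined with the monotonicity, conformality and nontriviality axioms of symplectic capacities (both $c_G$ and $c_{HZ}$ are such capacities). The input, established in Sections~\ref{hpjts} and~\ref{sectionembeddings} (see also \cite{DiScalaLoi08}), is that for an irreducible bounded symmetric domain $\Omega\subset\C^N$, $N=\dim_\C\Omega$, in its standard realization one has
\[
B^{2N}(1)\ \subseteq\ \Omega\ \subseteq\ \Delta^N:=\{\,|z_1|<1,\dots,|z_N|<1\,\}\ \subseteq\ Z^{2N}(1),
\]
so that the two outer inclusions are symplectic embeddings
\[
\bigl(B^{2N}(1),\omega_0\bigr)\ \hookrightarrow\ (\Omega,\omega_0)\ \hookrightarrow\ \bigl(Z^{2N}(1),\omega_0\bigr)
\]
inside $(\C^N,\omega_0)$.

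First I would prove \eqref{cGcartan} and \eqref{cHZcartan}. Applying monotonicity of an arbitrary capacity $c$ to the two embeddings above gives $c\bigl(B^{2N}(1),\omega_0\bigr)\le c(\Omega,\omega_0)\le c\bigl(Z^{2N}(1),\omega_0\bigr)$, and by nontriviality both ends equal $\pi$, so $c(\Omega,\omega_0)=\pi$; taking $c=c_G$ and $c=c_{HZ}$ yields the two equalities. (For $c_G$ the upper bound could instead be obtained from the symplectic embedding $(\Omega,\omega_0)\hookrightarrow(M,\omega_{FS})$ of Section~\ref{sectionembeddings} together with Theorem~\ref{main}; but the cylinder argument is unavoidable for $c_{HZ}$, since no upper bound for $c_{HZ}(M,\omega_{FS})$ is available.) For \eqref{cGproductCartan}, set $N=n_1+\dots+n_r$ and view $\Omega_1\times\dots\times\Omega_r\subset\C^N$ with $\omega_0^1\oplus\dots\oplus\omega_0^r$ the standard form of $\C^N$. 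The chain
\[
B^{2N}(1)\ \subseteq\ \prod_{i=1}^r B^{2n_i}(1)\ \subseteq\ \prod_{i=1}^r\Omega_i\ \subseteq\ \prod_{i=1}^r Z^{2n_i}(1)\ \subseteq\ Z^{2N}(1)
\]
(the first inclusion since $\sum_i\sum_j|z^i_j|^2<1$ forces each $\sum_j|z^i_j|^2<1$, the last by retaining only the constraint from the first factor) again sandwiches the product between a unit ball and a unit cylinder, so monotonicity and nontriviality of $c_G$ give \eqref{cGproductCartan}.

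Finally, to prove \eqref{cGupboundprodHSSNT} I would fix $i\in\{1,\dots,r\}$, replace every factor other than the $i$-th by the whole $\C^{n_j}$, and use $\Omega_i\subseteq Z^{2n_i}(1)$ to realize $\bigl(\prod_j\Omega_j,\bigoplus_j a_j\omega_0^j\bigr)$ as an open subset of $W:=\C^{n_1}\times\dots\times Z^{2n_i}(1)\times\dots\times\C^{n_r}$ equipped with the form $\bigoplus_j a_j\omega_0^j$. The $\R$-linear symplectomorphism $L\colon\bigl(\C^N,\bigoplus_j a_j\omega_0^j\bigr)\to(\C^N,\omega_0)$ that dilates the $j$-th block by $\sqrt{|a_j|}$ (post-composed on that block with complex conjugation when $a_j<0$) carries $W$ into the cylinder $Z^{2N}(\sqrt{|a_i|})$, whence, by monotonicity of $c_G$ together with its nontriviality and conformality,
\[
c_G\Bigl(\prod_j\Omega_j,\ \bigoplus_j a_j\omega_0^j\Bigr)\ \le\ c_G\Bigl(W,\ \bigoplus_j a_j\omega_0^j\Bigr)\ =\ c_G\bigl(L(W),\omega_0\bigr)\ \le\ c_G\bigl(Z^{2N}(\sqrt{|a_i|}),\omega_0\bigr)\ =\ |a_i|\,\pi ,
\]
and minimizing over $i$ yields \eqref{cGupboundprodHSSNT}. (Alternatively, \eqref{cGupboundprodHSSNT} follows from the symplectic embeddings $(\Omega_j,a_j\omega_0^j)\hookrightarrow(M_j,a_j\omega_{FS}^j)$ and inequality \eqref{cGupboundprodHSSCT} of Theorem~\ref{main2}.)

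The only substantive ingredient is the geometric input of the first paragraph; everything else is a formal consequence of the capacity axioms, so I anticipate no difficulty there. The containment $B^{2N}(1)\subseteq\Omega$ is exactly the embedding statement invoked from \cite{DiScalaLoi08} and Section~\ref{sectionembeddings}. The containment $\Omega\subseteq\Delta^N$ asserts that in the standard realization the spectral norm dominates each coordinate; this is a case-by-case verification, immediate for the classical matrix domains of types $\mathrm I$, $\mathrm{II}$, $\mathrm{III}$, where it reduces to $|z_{k\ell}|\le\|Z\|_{\mathrm{op}}<1$, and requiring the analogous normalization for the spinor and exceptional types $\mathrm{IV}$, $\mathrm V$, $\mathrm{VI}$.
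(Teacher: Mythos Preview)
Your argument for \eqref{cGcartan} and \eqref{cHZcartan} is exactly the paper's: sandwich $\Omega$ between the unit ball and the unit cylinder and use the capacity axioms. For \eqref{cGupboundprodHSSNT} (and hence the upper bound in \eqref{cGproductCartan}) you take a genuinely different and more elementary route than the paper: the paper embeds each $(\Omega_j,a_j\omega_0^j)$ into its compact dual $(M_j,a_j\omega_{FS}^j)$ via \eqref{symplecticdual} and then invokes inequality \eqref{cGupboundprodHSSCT} of Theorem~\ref{main2}, which rests on the Gromov--Witten machinery of Section~\ref{sectionLu}. Your rescaled-cylinder argument bypasses all of this and uses only the capacity axioms; it is what the paper does in the special case $a_1=\dots=a_r=1$ of \eqref{cGcartan}, but the paper does not carry it out for general $a_i$. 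Your alternative (last sentence of the penultimate paragraph) is precisely the paper's proof.

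There is, however, a real gap in your geometric input. The inclusion $\Omega\subseteq\Delta^N$ is \emph{not} what Section~\ref{sectionembeddings} establishes, and in fact it fails for type~$\mathrm{IV}$ in the normalization you need. In the paper's normalization (minimal tripotents have unit length in $(\cdot\mid\cdot)$, so that $B^{2n}(1)\subset\Omega$), a maximal tripotent $c_1+c_2$ of the rank-two domain $\Omega_{\mathrm{IV}}$ has Euclidean norm $\sqrt 2$; along the corresponding coordinate axis the domain reaches radius $\sqrt 2$, so some coordinate exceeds $1$ and $\Omega_{\mathrm{IV}}\not\subset\Delta^n$. Conversely, in the classical Hua realization one does have $\Omega_{\mathrm{IV}}\subset B^{2n}(1)\subset\Delta^n$, but then $B^{2n}(1)\not\subset\Omega_{\mathrm{IV}}$ and your lower bound collapses. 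No single linear normalization gives both inclusions for type~$\mathrm{IV}$.

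The fix is painless: replace the polydisk step by what Section~\ref{sectionembeddings} actually proves, namely the unitary embedding $W\colon(\Omega,\omega_0)\hookrightarrow(Z^{2n}(1),\omega_0)$ of \eqref{omegaintocil}. Since $W$ is a global unitary of $\C^n$, the product $W_1\times\dots\times W_r$ embeds $\prod_j\Omega_j$ symplectically into $\prod_j Z^{2n_j}(1)\subset Z^{2N}(1)$, and for \eqref{cGupboundprodHSSNT} one applies $W_i$ only on the $i$-th factor before your rescaling map $L$. With this substitution every step of your argument goes through unchanged.
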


\begin{theor}\label{main5}
Let $(\Omega, \omega_0)$  be a  Cartan domain  and let  $(N, \omega)$ be   any closed symplectic manifold.
Then
\begin{equation}\label{cHZMprod}
c_{HZ}(N\times \Omega, \omega\oplus\omega_0)=\pi.
\end{equation}
\end{theor}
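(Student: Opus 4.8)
\emph{Proof plan.} We prove the two inequalities $c_{HZ}(N\times\Omega,\omega\oplus\omega_0)\geq\pi$ and $c_{HZ}(N\times\Omega,\omega\oplus\omega_0)\leq\pi$ separately; the special case $N=\{\mathrm{pt}\}$ already recovers \eqref{cHZcartan}.

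For the lower bound the plan is to exploit the symplectic embedding $B^{2n}(1)\hookrightarrow(\Omega,\omega_0)$ of \cite{DiScalaLoi08} (recalled in Section~\ref{sectionembeddings}), where $n=\dim_{\C}\Omega$. Fix $s\in(0,1)$ and split $\C^n=\C^{n-1}\times\C$ with coordinates $(z',z_n)$. Since $B^{2n-2}\big(\sqrt{1-s^{2}}\big)\times B^{2}(s)\subseteq B^{2n}(1)$, monotonicity reduces us to producing, on the product $(N\times B^{2n-2}(\sqrt{1-s^{2}}))\times B^{2}(s)$, an admissible Hamiltonian of oscillation close to $\pi s^{2}$. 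Take $H(p,z',z_n)=\chi(|z'|^{2})\,f(|z_n|^{2})$, where $f$ is constant near $0$, vanishes near $s^{2}$ and satisfies $-\pi<f'<0$, and $\chi$ is a cut-off with $0\leq\chi\leq 1$, $\chi\equiv 1$ near the origin, vanishing near $\sqrt{1-s^{2}}$, and with $\|\chi'\|_{\infty}$ small (say $<\pi/f(0)$). Because $H$ does not depend on the $N$-coordinate and $N$ is closed, $H$ is compactly supported in $N\times\Omega$; its flow is the product of the identity on $N$ with the integrable (toric) flow of $\chi f$ on $B^{2n-2}\times B^{2}$, and estimating the rotation frequencies on the (possibly degenerate) invariant tori shows that every nonconstant periodic orbit has period $>1$. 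Hence $c_{HZ}(N\times\Omega,\omega\oplus\omega_0)\geq \mathrm{osc}(H)=f(0)$; letting $\|f'\|_{\infty}\to\pi$ and then $s\to 1$ yields $\geq\pi$.

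For the upper bound the key geometric input is that, after a unitary (hence symplectic) change of coordinates in $\C^{n}$, a Cartan domain in its Harish--Chandra realization is pinched between the unit ball and the unit cylinder,
\begin{equation*}
B^{2n}(1)\ \subseteq\ \Omega\ \subseteq\ Z^{2n}(1);
\end{equation*}
for the classical domains this is immediate from the matrix (resp.\ Lie-ball) realizations — an $m\times m$ matrix of operator norm $<1$ has all entries of modulus $<1$, while a diagonal coordinate already ranges over the full unit disc — and an analogous bound holds for the two exceptional domains. Consequently $N\times\Omega$ symplectically embeds, by inclusion, into $N\times Z^{2n}(1)$, which after reordering the factors is $W\times B^{2}(1)$ with $W:=N\times\C^{n-1}$; by monotonicity,
\[
c_{HZ}(N\times\Omega,\omega\oplus\omega_0)\ \leq\ c_{HZ}\big(W\times B^{2}(1)\big).
\]
It then remains to invoke the known estimate $c_{HZ}(W\times B^{2}(r))\leq\pi r^{2}$, valid for an arbitrary symplectic manifold $W$ (see \cite{HOFERZEHNDER94} and \cite{LU06}), which with $r=1$ gives $c_{HZ}(N\times\Omega,\omega\oplus\omega_0)\leq\pi$ and closes the proof.

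The main obstacle is the upper bound. The value being exactly $\pi$ — rather than some other multiple of $\pi$ — hinges entirely on the normalization, i.e.\ on the Harish--Chandra realization being pinched exactly between $B^{2n}(1)$ and $Z^{2n}(1)$; so one must identify, in each case, an extremal complex-coordinate direction of $\Omega$ of width precisely $2$, which for the exceptional domains needs a little care. The analytically deep ingredient, the product estimate $c_{HZ}(W\times B^{2}(r))\leq\pi r^{2}$, is borrowed from the literature; granting it, the argument is just a combination of monotonicity with the elementary geometry of Harish--Chandra realizations. On the lower-bound side the only delicate point is the admissibility of the toric Hamiltonian, which follows from the frequency estimates sketched above.
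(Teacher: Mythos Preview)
Your upper bound is essentially the paper's argument in disguise: the paper sandwiches $N\times\Omega$ between $N\times B^{2n}(1)$ and $N\times Z^{2n}(1)$ and then invokes Lu's result (Theorem~1.17 in \cite{LU06}, quoted here as Lemma~\ref{lemmaludeep}) that $c_{HZ}(N\times B^{2n}(r))=c_{HZ}(N\times Z^{2n}(r))=\pi r^{2}$ for \emph{closed} $N$. Your reorganization $N\times Z^{2n}(1)\cong W\times B^{2}(1)$ with $W=N\times\C^{n-1}$ is fine, but the claim that $c_{HZ}(W\times B^{2}(r))\leq\pi r^{2}$ holds for an \emph{arbitrary} symplectic manifold $W$ is not a known theorem (and is not in \cite{HOFERZEHNDER94}); what you actually need, and what \cite{LU06} actually proves, is precisely the case $W=N\times\C^{n-1}$ with $N$ closed --- i.e.\ Lu's Theorem~1.17 again. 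So the upper bound is correct once the overstatement is removed, and it coincides with the paper's route.

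Your lower bound, however, has a genuine gap. With $H=\chi(|z'|^{2})f(|z_n|^{2})$ the two rotation frequencies are $\chi(|z'|^{2})f'(|z_n|^{2})$ and $\chi'(|z'|^{2})f(|z_n|^{2})$, so admissibility indeed requires $\|\chi'\|_{\infty}<\pi/f(0)$ as you say. But $\chi$ must drop from $1$ to $0$ on an interval of length at most $1-s^{2}$ (the range of $|z'|^{2}$), forcing $\|\chi'\|_{\infty}\geq 1/(1-s^{2})$; together with $f(0)\approx\pi s^{2}$ this is only consistent when $s^{2}<1-s^{2}$, i.e.\ $s^{2}<\tfrac12$. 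Thus your construction yields at best $c_{HZ}\geq\pi/2$, and letting $s\to 1$ is not permitted. The repair is immediate: drop the product form and use a purely radial $H=h(|z|^{2})$ on $B^{2n}(1)$ (single frequency $h'$, admissible once $|h'|<\pi$, oscillation approaching $\pi$), pulled back trivially to $N\times B^{2n}(1)$; since $N$ is closed this is compactly supported and has the same periodic orbits as $h$. Alternatively --- and this is what the paper does --- simply cite Lu's Lemma~\ref{lemmaludeep} for both inequalities at once: $\pi=c_{HZ}(N\times B^{2n}(1))\leq c_{HZ}(N\times\Omega)\leq c_{HZ}(N\times Z^{2n}(1))=\pi$.
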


The proof of Theorem \ref{main4} which 
extends the results in \cite{LuDingQjao} valid  for classical Cartan domains  to  the product of  Cartan domains (including  the exceptional ones),
is based (together with the inclusion $B^{2n}(1)\subset (\Omega, \omega_0)$)  on the fact 
 that any $n$-dimensional  Cartan domain  $(\Omega, \omega_0)$ 
symplectically embeds into the cylinder $(Z^{2n}(1), \omega_0)$ (see Sections \ref{hpjts} and \ref{sectionembeddings}    for details).

The organization of the paper is as follows. In Section \ref{sectionLu} we summarize the above mentioned   Lu's work and some of  his results 
needed in this paper. In Section \ref{hpjts} we briefly  recall some  tools  on Hermitian positive Jordan triple systems which will be used in Section 
\ref{sectionembeddings} to construct the above mentioned embeddings of a Cartan domain into its compact dual, of the unit ball  into a  Cartan domain 
and of a Cartan domain into the unitary cylinder.
Moreover in Subsection \ref{sectionminatlas} we show how these symplectic embeddings could be used to get estimate and computation 
of the minimal number of Darboux charts needed to cover a HSSCT.
Finally, Section \ref{sectionproofs} is dedicated to the (conclusion of the) proofs of our theorems.
\vskip 0.3cm
\noindent {\bf Acknowledgments}. 
The authors  are indebted to Guy Roos for suggesting us the idea of  the construction of the symplectic  embedding of a Cartan domain into the unitary  cylinder  via the use of Hermitian positive Jordan triple systems. The authors would like also  to thank  Dietmar Salamon   for useful remarks on Gromov--Witten invariants.

\section{Lu's pseudo symplectic  capacities and Gromov--Witten invariants}\label{sectionLu}
G. Lu \cite{LU06} defines the concept of \emph{pseudo symplectic capacity} by weakening the  requirements for a symplectic capacity in such a way that this new concept depends on the homology classes of the symplectic manifold in question (the reader is referred to \cite{LU06} for more details.). More precisely, if one 
denotes by ${{\mathcal C} (2n, k)}$ the set of all tuples $(M, \omega; \alpha_1, \dots, \alpha_k)$ 
consisting of a $2n$-dimensional connected symplectic manifold $(M, \omega)$ and $k$ nonzero homology classes $\alpha_i\in H_*(M; \q )$, $i=1, \dots, k$,
a map $c^{(k)}$ from ${\mathcal C} (2n,k)$ to $[0, +\infty]$
is called a \emph{k-pseudo symplectic capacity}
 if it satisfies the following properties:

({\bf pseudo monotonicity}) if there exists a symplectic embedding $\varphi: (M, \omega_1)\rightarrow (M, \omega_2)$  then, for any $\alpha_i\in H_*(M_1;  \q)$,
$i=1, \dots k$,
\[c^{(k)}(M_1, \omega_1; \alpha_1,\dots ,\alpha_k)\leq c^{(k)}(M_2, \omega_2; \varphi_*(\alpha_1), \dots , \varphi_*(\alpha_k));\]  

({\bf conformality}) $c^{(k)}(M, \lambda\omega; \alpha_1, \dots, \alpha_k)=|\lambda|c^{(k)}(M, \omega; \alpha_1, \dots, \alpha_k)$, for every $\lambda\in\R\setminus \{0\}$
and all homology classes $\alpha_i\in H_*(M; \q)\setminus \{0\}$, $i=1, \dots ,k$;

({\bf nontriviality}) $c(B^{2n}(1), \omega_0; pt, \dots, pt)=\pi =c(Z^{2n}(1), \omega_0; pt, \dots, pt)$,
where we denote by $pt$ the homology class of a point.

\vskip 0.3cm

Note that if $k>1$ a $(k-1)$-pseudo symplectic capacity is defined by
\[c^{(k-1)}(M, \omega; \alpha_1, \dots , \alpha_{k-1}):=c^{(k)}(M, \omega;pt,  \alpha_1, \dots , \alpha_{k-1})\]
and any $c^{(k)}$ induces a true symplectic capacity 
\[c^{(0)}(M, \omega):=c^{(k)}(M, \omega; pt, \dots , pt).\]
Observe also that (unlike  symplectic capacities) pseudo symplectic capacities do not define symplectic invariants. 

\vskip 0.5cm

In   \cite{LU06} G. Lu  was able to construct two  $2$-pseudo symplectic capacities (christened by Lu   as \emph{pseudo symplectic capacities of Hofer--Zehnder type})
denoted by  $C_{HZ}^{(2)}(M, \omega; \alpha_1, \alpha_2)$ and $C_{HZ}^{(2o)}(M, \omega; \alpha_1, \alpha_{2})$ respectively (see Definition 1.3 and  Theorem 1.5 in  \cite{LU06}), where 
$\alpha_1$ and $\alpha_2$ are  homology classes\footnote{In the notations of \cite{LU06}  the generic classes $\alpha_1$  (resp. $\alpha_{2}$) are called $\alpha_0$ (resp. $\alpha_\infty$).
The reason for this notation comes from the concept  of hypersurface $S\subset M$ separating the homology classes $\alpha_0$ and $\alpha_{\infty}$  (see Definition 1.3 and the 
$(\alpha_0, \alpha_{\infty})$-Weinstein conjecture at p.6 of \cite{LU06}).} in $H_*(M;  \q)$.
Denote by 
\[C_{HZ}(M, \omega):=C_{HZ}^{(2)}(M, \omega; pt, pt)\]
 (resp. $C^{0}_{HZ}(M, \omega):=C_{HZ}^{(2o)}(M, \omega; pt, pt)$)
the corresponding  true symplectic capacities associated to Lu's pseudo symplectic capacities.
In the next lemma we summarize some   properties of the concepts involved so far.
\begin{lem}\label{lemmasumm}
Let $(M, \omega)$ be any symplectic manifold. 
Then, for arbitrary homology classes $ \alpha_1, \alpha_2\in H_*(M; \q)$   and for a nonzero homology class 
$\alpha$, with $\dim \alpha\leq \dim M-1$, the following inequalities hold true:
\begin{equation}\label{basiciC2}
C^{(2)}_{HZ}(M, \omega; \alpha_1, \alpha_2)\leq C^{(2o)}_{HZ}(M, \omega; \alpha_1, \alpha_{2})
\end{equation}
\begin{equation}
C^{(2)}_{HZ}(M, \omega; \alpha_1, \alpha_2)\leq C_{HZ}(M, \omega)\leq c_{HZ}(M, \omega)
\end{equation}
\begin{equation}
C_{HZ}^{(2o)}(M, \omega; \alpha_1, \alpha_2)\leq C^{o}_{HZ}(M, \omega)\leq c^{o}_{HZ}(M, \omega)
\end{equation}
\begin{equation}\label{cG<C2}
c_G(M, \omega)\leq C^{(2)}_{HZ}(M, \omega; pt, \alpha),
\end{equation}
where 
$c^{o}_{HZ}(M, \omega)$ is the $\pi_1$-sensitive Hofer--Zehnder capacity introduced in \cite{SCHWARZ00},
$c_{HZ}(M, \omega)$ is  the Hofer-Zehnder capacity  and $c_G(M, \omega)$ is the Gromov width of $(M, \omega)$.
Furthermore, if $M$ is closed then 
\[C_{HZ}(M, \omega)=c_{HZ}(M, \omega)\]
and
\[C_{HZ}^{o}(M, \omega)= c_{HZ}^{o}(M, \omega).\]
\end{lem}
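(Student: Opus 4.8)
The plan is to assemble Lemma \ref{lemmasumm} from the definitions and from the results of Lu \cite{LU06}, essentially by tracing through the chain of inequalities induced by the pseudo monotonicity and the reductions $c^{(k)}\mapsto c^{(k-1)}$. First I would recall from \cite{LU06} the construction of the two $2$-pseudo symplectic capacities $C^{(2)}_{HZ}$ and $C^{(2o)}_{HZ}$ of Hofer--Zehnder type (Definition 1.3 and Theorem 1.5 there), together with the key inequality $C^{(2)}_{HZ}(M,\omega;\alpha_1,\alpha_2)\leq C^{(2o)}_{HZ}(M,\omega;\alpha_1,\alpha_2)$, which is immediate from the definitions since the admissible class of Hamiltonians defining $C^{(2o)}_{HZ}$ (the $\pi_1$-sensitive version, requiring all contractible periodic orbits to be constant) is a subset of the one defining $C^{(2)}_{HZ}$; the supremum over a smaller class of admissible functions is smaller, giving \eqref{basiciC2}.

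Next I would establish the two vertical chains. For $C^{(2)}_{HZ}(M,\omega;\alpha_1,\alpha_2)\leq C_{HZ}(M,\omega)\leq c_{HZ}(M,\omega)$: the first inequality follows by specializing $\alpha_1,\alpha_2$ and using the monotonicity of $C^{(2)}_{HZ}$ in its homology arguments (larger/less restrictive homology constraints enlarge the admissible set), so that $C^{(2)}_{HZ}(M,\omega;\alpha_1,\alpha_2)\leq C^{(2)}_{HZ}(M,\omega;pt,pt)=C_{HZ}(M,\omega)$, while the second inequality $C_{HZ}\leq c_{HZ}$ is proved by Lu by comparing the admissible classes directly. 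The analogous chain for the $\pi_1$-sensitive versions, $C^{(2o)}_{HZ}(M,\omega;\alpha_1,\alpha_2)\leq C^{o}_{HZ}(M,\omega)\leq c^{o}_{HZ}(M,\omega)$, is obtained the same way, with $c^{o}_{HZ}$ the $\pi_1$-sensitive Hofer--Zehnder capacity of \cite{SCHWARZ00}. For the inequality \eqref{cG<C2}, $c_G(M,\omega)\leq C^{(2)}_{HZ}(M,\omega;pt,\alpha)$ with $\dim\alpha\leq\dim M-1$, I would invoke Lu's nonsqueezing-type argument: given a symplectic embedding of $B^{2n}(r)$ into $M$, one uses pseudo monotonicity together with the nontriviality normalization $C^{(2)}_{HZ}(B^{2n}(1),\omega_0;pt,pt)=\pi$ and a suitable rescaling (conformality) to conclude $\pi r^2\leq C^{(2)}_{HZ}(M,\omega;pt,\alpha)$, then take the supremum over $r$; the role of the nonzero class $\alpha$ of dimension at most $\dim M-1$ is exactly that it can be pushed forward and remains admissible as the ``$\alpha_\infty$'' slot.

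Finally, for the closed case I would cite the fact, proved in \cite{LU06}, that when $M$ is closed the class of Hamiltonians implicitly cut out by the homological condition with $pt$ in both slots coincides (up to the relevant sup) with the full Hofer--Zehnder admissible class, yielding $C_{HZ}(M,\omega)=c_{HZ}(M,\omega)$ and similarly $C^{o}_{HZ}(M,\omega)=c^{o}_{HZ}(M,\omega)$; this is where compactness is genuinely used, since on an open manifold the $pt$ constraint can be strictly more restrictive. The main obstacle is not any single estimate but rather bookkeeping: one must be careful that the two families $C^{(2)}_{HZ}$ and $C^{(2o)}_{HZ}$ are indexed so that pseudo monotonicity applies in the direction needed for each inequality, and that the passage $c^{(k-1)}(\cdot;\alpha_1,\dots,\alpha_{k-1})=c^{(k)}(\cdot;pt,\alpha_1,\dots,\alpha_{k-1})$ and $c^{(0)}(\cdot)=c^{(k)}(\cdot;pt,\dots,pt)$ is used consistently throughout. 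Since all of these statements are precisely assembled from \cite{LU06} (and \cite{SCHWARZ00} for the definition of $c^{o}_{HZ}$), the proof is essentially a citation-and-collation argument, and I would present it as such rather than reproving Lu's constructions.
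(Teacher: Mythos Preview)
Your approach is essentially the paper's own: the proof there reads in full ``See Lemma 1.4 and (12) in \cite{LU06}'', and you likewise reduce everything to citations from \cite{LU06} and \cite{SCHWARZ00}, with some added commentary on why the inequalities hold.

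One small correction to your commentary on \eqref{basiciC2}: you have the containment of admissible classes reversed. The $\pi_1$-sensitive condition (only \emph{contractible} nonconstant periodic orbits forbidden) is \emph{weaker} than the ordinary Hofer--Zehnder condition (all nonconstant periodic orbits forbidden), so the admissible class for $C^{(2o)}_{HZ}$ \emph{contains} that for $C^{(2)}_{HZ}$, not the other way around. The supremum over the larger class is larger, which is what gives $C^{(2)}_{HZ}\leq C^{(2o)}_{HZ}$. Your stated conclusion is right, but the sentence justifying it is inverted; if you present the sketch rather than the bare citation, fix this.
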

\begin{proof}
See Lemma 1.4 and (12) in  \cite{LU06}.
\end{proof}

\begin{remark}\rm
It follows by (\ref{basiciC2}) and by the last two equalities
that for a closed symplectic manifold $(M ,\omega)$
\[c_{HZ}(M, \omega)\leq c^o_{HZ}(M, \omega).\]
Thus  
 inequality (\ref{cHZlowerboundprodHSSCT}) in Theorem \ref{main2}
holds  true  also when we replace $c_{HZ}$ with $c^o_{HZ}$.
\end{remark}

When the symplectic manifold is closed  the pseudo symplectic capacities   $C_{HZ}^{(2)}(M, \omega; \alpha_1, \alpha_2)$ and $C_{HZ}^{(2o)}(M, \omega; \alpha_1, \alpha_{2})$  can be estimated  by other two pseudo symplectic capacities $\GW(M, \omega; \alpha_1, \alpha_2)$ and \linebreak  $\GW_0(M, \omega; \alpha_1, \alpha_2)$ 
defined in terms of  \emph{Liu--Tian type Gromov-Witten invariants} as follows. Let $A\in H_2(M, \z)$:
the Liu--Tian  type  Gromov--Witten invariant of genus $g$ and with $k$ marked points  is a homomorphism
\[\Psi^M_{A, g, k}:H_*(\overline{{\mathcal M}}_{g, k}; \q)\times H_*(M; \q)^{k}\rightarrow \q , \ 2g+k \geq 3\]
where $\overline{{\mathcal M}}_{g, k}$ is the space of isomorphism classes of genus $g$ stable curves with $k$ marked points. When there is no risk of confusion, we will omit the superscript $M$ in $\Psi^M_{A, g, k}$.
Roughly speaking, one can think of $\Psi^M_{A, g, k}({\mathcal{C}}; \alpha_1, \dots, \alpha_k)$ as counting, for suitable generic $\omega$-tame almost complex structure $J$ on $M$, the number of $J$-holomorphic curves of genus $g$ representing $A$, with $k$ marked points $p_i$ which pass through cycles $X_i$ representing $\alpha_i$, and such that the image of the curve belongs to a cycle representing ${\mathcal{C}}$ (the reader is referred to the Appendix in \cite{LU06} and references therein for details).

\noindent In fact, there are several different constructions of Gromov-Witten invariants in the literature and the question whether they agree is not trivial (see \cite{LU06} and also Chapter 7 in \cite{MCSA94}). The most commonly used are the Gromov--Witten invariants described in the book of D. McDuff and S. Salamon \cite{MCSA94} 
which are homomorphisms  
\[\Psi_{A, g, m+2}: H_*(M; \q)^{m+2}\rightarrow \q ,\  m\geq 1\]
and which play an important role in the proofs of this paper. The Lemma \ref{lemmaugGW} below gives conditions under which these invariants agree with the ones considered by Lu.

\smallskip

\noindent Let $\alpha_1, \alpha_2\in H_*(M, \q)$.
Following \cite{LU06} one defines
\[\GW_g (M, \omega; \alpha_1, \alpha_2)\in (0, +\infty]\]
as the infimum of the $\omega$-areas $\omega (A)$ of the homology classes $A\in H_2(M, \z)$ for which the Liu--Tian  Gromov--Witten
invariant \linebreak $\Psi_{A, g, m+2}(C; \alpha_1, \alpha_2, \beta_1, \dots , \beta_m)\neq 0$
for some homology classes $\beta_1, \dots , \beta_m\in H_*(M, \q)$
and $C\in H_*(\overline{{\mathcal M}}_{g, m+2}; \q)$ and integer $m\geq 1$ (we use the convention $\inf \emptyset = + \infty$). The positivity of $GW_g$ reflects the fact that $\Psi_{A, g, m+2} = 0$ if $\omega(A) <0$ (see, for example, Section 7.5 in \cite{MCSA94}).
Set
\begin{equation}\label{GW}
\GW (M, \omega; \alpha_1, \alpha_2):=\inf \{\GW_g(M, \omega; \alpha_1, \alpha_2) \ | \ g\geq 0\}\in [0, +\infty].
\end{equation}

\begin{lem}\label{C2GW}
Let $(M, \omega)$ be a closed symplectic manifold. Then 
\[0\leq\GW (M, \omega;  \alpha_1, \alpha_2)\leq \GW_0 (M, \omega;  \alpha_1, \alpha_2).\]
Moreover $\GW (M, \omega;\alpha_1, \alpha_2)$ and $\GW _0(M, \omega; \alpha_1, \alpha_2)$
are pseudo symplectic capacities and, if the dimension $\dim M\geq 4$ then, for nonzero homology classes $\alpha_1, \alpha_2$,
we have
\[C_{HZ}^{(2)}(M ,\omega; \alpha_1, \alpha_2)\leq \GW (M, \omega; \alpha_1, \alpha_2)\]
\[C_{HZ}^{(2o)}(M ,\omega; \alpha_1, \alpha_2)\leq \GW_0 (M, \omega; \alpha_1, \alpha_2).\]
In particular, for every nonzero homology class $\alpha\in H_*(M ,\q)$,
\begin{equation}\label{CHZGW}
C_{HZ}^{(2)}(M ,\omega; pt, \alpha)\leq \GW (M, \omega; pt, \alpha)
\end{equation}
\begin{equation}\label{CHZGW0}
C_{HZ}^{(2o)}(M ,\omega; pt, \alpha)\leq \GW_0 (M, \omega; pt, \alpha).
\end{equation}
\end{lem}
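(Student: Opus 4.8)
The plan is to reduce each assertion of the lemma to the corresponding statement in Lu's paper \cite{LU06}, writing out the short arguments explicitly and isolating the single place where genuine analytic input is needed.

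First I would settle the chain $0\leq\GW(M,\omega;\alpha_1,\alpha_2)\leq\GW_0(M,\omega;\alpha_1,\alpha_2)$. Non-negativity is immediate from the definition (\ref{GW}): every number entering the infimum defining $\GW_g$ is the $\omega$-area $\omega(A)$ of a class $A$ for which some Liu--Tian invariant $\Psi_{A,g,m+2}(C;\alpha_1,\alpha_2,\beta_1,\dots,\beta_m)$ is non-zero, and, as recalled just before the lemma, $\Psi_{A,g,m+2}$ vanishes identically when $\omega(A)<0$; hence each such $\omega(A)$ is $\geq0$, so $\GW_g\geq0$ for every $g$ and therefore $\GW\geq0$. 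For $\GW\leq\GW_0$ I would use that $\GW_0$ is defined by the very same recipe but subject to an extra constraint on the moduli-space class $C$ (in Lu's definition $C$ is required to be the point class, i.e.\ the domain is rigid): the set of classes $A$ admissible for $\GW_0$ is then contained in the one admissible for $\GW$, and passing to infima reverses the inclusion.

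Next I would check that $\GW$ and $\GW_0$ satisfy the axioms of a pseudo symplectic capacity. Conformality: for $\lambda>0$ it is direct, since the Liu--Tian invariants depend only on the deformation class of $\omega$ (through the choice of an $\omega$-tame almost complex structure), so the sets of admissible classes for $(M,\lambda\omega)$ and $(M,\omega)$ coincide while $(\lambda\omega)(A)=\lambda\,\omega(A)$; the case $\lambda<0$ is handled in \cite{LU06} and requires, in addition, tracking how reversing the symplectic form affects the invariants, the net effect restoring the factor $|\lambda|$. Pseudo-monotonicity: a symplectic embedding between closed symplectic $2n$-manifolds has open and compact image, hence is a symplectomorphism $\varphi$ onto a union of components, and the Liu--Tian invariants are natural under symplectomorphisms, $\Psi^{M_1}_{A,g,k}(C;\alpha_\bullet)\neq0\iff\Psi^{M_2}_{\varphi_*A,g,k}(C;\varphi_*\alpha_\bullet)\neq0$, with $\omega_2(\varphi_*A)=\omega_1(A)$, so the two $\GW$'s agree; the general, non-closed case is covered by Lu's inner-regularity extension of $\GW$ and $\GW_0$ to arbitrary symplectic manifolds. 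Nontriviality on $B^{2n}(1)$ and $Z^{2n}(1)$ then follows from that extension together with the evaluation of the relevant genus-zero three-point invariant of $(\CP^n,\omega_{FS})$ on the class of a line (of area $\pi$), exactly as in \cite{LU06}.

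Finally, the substantive inequalities $C_{HZ}^{(2)}(M,\omega;\alpha_1,\alpha_2)\leq\GW(M,\omega;\alpha_1,\alpha_2)$ and $C_{HZ}^{(2o)}(M,\omega;\alpha_1,\alpha_2)\leq\GW_0(M,\omega;\alpha_1,\alpha_2)$ for $\dim M\geq4$: I would quote these directly from Lu's main theorem relating the Hofer--Zehnder-type pseudo capacities to Gromov--Witten invariants, after which the displayed special cases (\ref{CHZGW})--(\ref{CHZGW0}) are obtained simply by setting $\alpha_1=pt$ (a nonzero class of dimension $\leq\dim M-1$). I would only sketch the underlying mechanism: given $A$ with $\Psi_{A,g,m+2}(C;\alpha_1,\alpha_2,\beta_\bullet)\neq0$ and a Hamiltonian $H$ admissible in the definition of $C_{HZ}^{(2)}(M,\omega;\alpha_1,\alpha_2)$ whose oscillation exceeds $\omega(A)$, one shows the moduli space of suitably perturbed $J$-holomorphic curves in class $A$ subject to the incidence conditions imposed by cycles representing $\alpha_1,\alpha_2,\beta_\bullet$ is non-empty, and the energy estimate $\omega(A)<\mathrm{osc}(H)$ forces a non-constant $1$-periodic orbit of the Hamiltonian flow of $H$ to bubble off; this is the Gromov--Witten refinement of the Hofer--Viterbo argument for $\CP^n$, and the hypothesis $\dim M\geq4$ is precisely what makes the transversality/dimension count and the argument separating the homology classes $\alpha_1,\alpha_2$ work. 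The main obstacle — and the only part I would not reprove here — is exactly this analytic construction of the periodic orbit; all the remaining assertions are bookkeeping with the definitions and the naturality of the invariants.
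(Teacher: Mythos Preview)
Your approach is essentially the paper's own: the paper's proof is simply ``See Theorems 1.10 and 1.13 in \cite{LU06}'', and you are doing the same reduction to Lu, only with an expanded sketch of what those theorems say and why.

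One correction, though: you have misread the subscript in $\GW_0$. In this paper (see the display defining $\GW$ just before the lemma) the quantity $\GW_g(M,\omega;\alpha_1,\alpha_2)$ is defined for each genus $g\geq 0$, and $\GW_0$ is the $g=0$ instance; it is \emph{not} a version with the class $C\in H_*(\overline{\mathcal M}_{g,m+2};\q)$ forced to be the point class. The inequality $\GW\leq \GW_0$ is then a triviality: $\GW=\inf_{g\geq 0}\GW_g\leq \GW_0$. Your ``infimum over a subset'' reasoning is the right shape, but the relevant restriction is on the genus, not on $C$. This does not affect the validity of the rest of your outline, which correctly defers the analytic core (the Hofer--Viterbo-type bubbling argument behind $C_{HZ}^{(2)}\leq \GW$ and $C_{HZ}^{(2o)}\leq \GW_0$) to Lu, exactly as the paper does.
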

\begin{proof}
See Theorems 1.10 and 1.13 in \cite{LU06}.
\end{proof}

\noindent We end this section with the following lemmata fundamental for the proof of our results.
Recall that a closed symplectic manifold is \emph{monotone} if there exists a number $\lambda >0$
such that $\omega (A)=\lambda c_1(A)$ for $A$ spherical (a homology class is called spherical if it is in the image of the Hurewicz homomorphism $\pi_2(M) \rightarrow H_2(M, \z)$). Further a homology class $A\in H_2(M, \z)$ is 
 \emph{indecomposable}  if it cannot be decomposed as  a sum $A=A_1+\cdots +A_k$, $k\geq 2$, of classes
which are spherical and satisfy $\omega (A_i)>0$ for $i=1, \dots , k$.
\begin{lem}\label{lemmaugGW} 
Let $(M, \omega)$ be a closed  monotone symplectic  manifold.
Let $A\in H_{2}(M, \z)$ be an indecomposable spherical  class, let $pt$ denote the class of a point in   $H_*(\overline{{\mathcal M}}_{g, m+2}; \q)$
and let   $\alpha_i\in H_*(M, \z)$, $i=1, 2, 3$.
Then the Liu--Tian Gromov--Witten invariant $\Psi_{A, 0, 3}(pt; \alpha_1, \alpha_2, \alpha_3)$ 
agrees with the Gromov--Witten invariant $\Psi_{A,0, 3}(\alpha_1, \alpha_2, \alpha_3)$.
\end{lem}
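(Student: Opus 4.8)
The plan is to reduce the comparison to the assertion that, for a generic $\omega$-tame almost complex structure $J$ on $M$, the moduli space $\overline{\mathcal M}_{0,3}(M,A;J)$ of genus-zero $J$-holomorphic stable maps in the class $A$ with three marked points is already a smooth compact oriented manifold of the expected real dimension $2n+2c_1(A)$, so that neither a virtual perturbation nor a semipositivity pseudocycle argument sees anything beyond an honest transverse count, and both constructions then compute the same rational number. Since $\overline{\mathcal M}_{0,3}$ is a single point, the class $pt\in H_*(\overline{\mathcal M}_{0,3};\q)$ is its fundamental class and imposes no condition on the location of the curve; thus $\Psi_{A,0,3}(pt;\alpha_1,\alpha_2,\alpha_3)$ is, by construction, the pairing of $\mathrm{ev}^*(\mathrm{PD}(\alpha_1)\otimes\mathrm{PD}(\alpha_2)\otimes\mathrm{PD}(\alpha_3))$ with the Liu--Tian virtual fundamental cycle of $\overline{\mathcal M}_{0,3}(M,A;J)$, while $\Psi_{A,0,3}(\alpha_1,\alpha_2,\alpha_3)$ is the analogous pairing with the evaluation pseudocycle on the same moduli space, this being well defined because a monotone manifold is semipositive and hence the construction of \cite{MCSA94} applies. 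Both invariants are deformation invariants, so $J$ may be taken generic.

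The key step, where indecomposability is used, is to rule out the degenerations of $J$-holomorphic curves in the class $A$. Since every non-constant $J$-holomorphic component has positive $\omega$-area, a stable map in class $A$ with two or more non-constant components would exhibit $A$ as a sum of at least two spherical classes of positive area, contradicting indecomposability; the same argument applied to $A=dB$ with $d\ge2$ and $B$ spherical shows that no component is multiply covered, so the somewhere injective $J$-spheres in class $A$ are precisely all the $J$-spheres in class $A$. By monotonicity $\omega(A)=\lambda c_1(A)$ with $\lambda>0$, and we may assume $c_1(A)>0$, since otherwise $\omega(A)\le0$ forces $\overline{\mathcal M}_{0,3}(M,A;J)$ to be empty and both invariants to vanish. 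The only reducible configurations that survive therefore consist of a single simple $J$-sphere in class $A$ carrying a tree of ghost (constant) bubbles, and since only three marked points are available and a ghost component needs at least three special points, any such configuration has at most one ghost subtree, which when present carries at least two of the marked points; on each such stratum at least two of the three evaluation maps coincide, so the image of the stratum under $\mathrm{ev}$ lies in a partial diagonal of $M\times M\times M$. A dimension count (the stratum has real codimension at least two in $\overline{\mathcal M}_{0,3}(M,A;J)$) then shows that it does not affect the pairing with generic cycles representing $\alpha_1,\alpha_2,\alpha_3$ once the index equality $\sum_i\codim\alpha_i=2n+2c_1(A)$ holds, and when this equality fails both invariants vanish for dimensional reasons. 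Hence the pairing is carried entirely by the top stratum of simple $J$-holomorphic spheres with irreducible domain.

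It then suffices to invoke the standard transversality theorem: for generic $\omega$-tame $J$ that top stratum is a smooth oriented manifold of dimension $2n+2c_1(A)$, which by the previous paragraph is moreover compact, so $\overline{\mathcal M}_{0,3}(M,A;J)$ is a closed oriented manifold of the expected dimension. Consequently its Liu--Tian virtual fundamental class equals its ordinary fundamental class, and both $\Psi_{A,0,3}(pt;\alpha_1,\alpha_2,\alpha_3)$ and $\Psi_{A,0,3}(\alpha_1,\alpha_2,\alpha_3)$ reduce to $\int_{\overline{\mathcal M}_{0,3}(M,A;J)}\mathrm{ev}^*(\mathrm{PD}(\alpha_1)\otimes\mathrm{PD}(\alpha_2)\otimes\mathrm{PD}(\alpha_3))$, that is, to the same signed count of $J$-spheres in class $A$ passing through generic representatives of $\alpha_1,\alpha_2,\alpha_3$. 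The last point to settle is that the orientation and normalization conventions of the Liu--Tian and McDuff--Salamon constructions agree on this common moduli space; this bit of bookkeeping, together with the ghost-bubble codimension estimate of the previous paragraph, is where I expect the only real work to lie, no deep new ingredient being required beyond the results already quoted.
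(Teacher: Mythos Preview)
The paper's own proof is a bare citation: ``See \cite[Proposition 7.6]{LU06}.'' No argument is given in the paper itself. Your proposal therefore does not so much differ from the paper's proof as supply one, and the sketch you give is the standard route to such a comparison result and is essentially correct: indecomposability of $A$ forbids both genuine bubbling and multiple covers, so every stable map in class $A$ is a simple sphere possibly decorated with ghost trees; for generic $J$ the simple locus is cut out transversally, and the ghost strata have codimension at least two (your dimension count is right: with two marks on the ghost the main component lies in $\mathcal M_{0,2}(M,A;J)$, of real dimension $2n+2c_1(A)-2$) and moreover land in partial diagonals under $\mathrm{ev}$, so they do not contribute to the pairing with generic cycles. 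Hence the Liu--Tian virtual class coincides with the honest fundamental class of a compact smooth oriented moduli space, and both invariants reduce to the same signed count. This is almost certainly the content of Lu's Proposition~7.6 as well, so your approach and the cited one should agree; the only residual work, as you correctly flag, is the bookkeeping of orientations and normalizations between the two constructions.
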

\begin{proof}
See \cite[Proposition 7.6]{LU06}.
\end{proof}

\begin{lem}\label{prodGromovWitten} 
Let $(N_1, \omega_1)$ and $(N_2, \omega_2)$
be two closed symplectic manifolds.
Then for every integer $k\geq 3$ and homology classes $A_2\in H_2(N_2; \z)$ and $\beta_i\in H_*(N_2; \z)$, $i=1, \dots, k$,
\[\Psi^{N_1\times N_2}_{0\oplus A_2, 0, k}(pt; [N_1]\otimes \beta_1,\dots , [N_1]\otimes \beta_{k-1}, pt\otimes \beta_k)=
\Psi^{N_2}_{A_2, 0, k}(pt; \beta_1, \dots , \beta_k).\]
\end{lem}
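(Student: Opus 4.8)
The plan is to derive the product formula from the well-known behaviour of Gromov--Witten invariants under taking products of symplectic manifolds, specialized to the case where the class in the first factor is zero. The guiding principle is that a stable map into $N_1 \times N_2$ representing the class $0 \oplus A_2$ must be constant in the $N_1$-direction on every component, so effectively it is a stable map into $N_2$ representing $A_2$, together with the (free) choice of the common image point in $N_1$. Constraining all but one of the marked points to lie on a cycle of the form $[N_1] \otimes \beta_i$ imposes no condition at all in the $N_1$-direction (since the curve already lies in a fibre $\{p\}\times N_2$), so it reduces to the constraint $\beta_i$ in $N_2$; the last marked point is constrained by $pt \otimes \beta_k$, whose $N_1$-component $pt$ exactly pins down the free point $p \in N_1$, contributing a factor $1$, and whose $N_2$-component contributes the constraint $\beta_k$. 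This is precisely the asserted identity.

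Concretely, I would proceed as follows. First I would recall the product axiom for Gromov--Witten invariants (see, e.g., the discussion of the product formula in Chapter 7 of \cite{MCSA94} and in the Appendix of \cite{LU06}): for classes $A_1 \in H_2(N_1;\z)$, $A_2 \in H_2(N_2;\z)$ and the point class $pt \in H_*(\overline{\mathcal M}_{0,k};\q)$, one has a K\"unneth-type decomposition expressing $\Psi^{N_1\times N_2}_{A_1\oplus A_2,0,k}(pt; \gamma_1\otimes\beta_1,\dots,\gamma_k\otimes\beta_k)$ as a sum of products $\Psi^{N_1}_{A_1,0,k}(pt;\gamma_1,\dots,\gamma_k)\cdot\Psi^{N_2}_{A_2,0,k}(pt;\beta_1,\dots,\beta_k)$ over the relevant splittings of the point class on the moduli space. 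Second, I would specialize $A_1 = 0$. Here the only genus-zero three-or-more-pointed invariant of $N_1$ in the zero class that is nonzero is the classical one: $\Psi^{N_1}_{0,0,k}(pt;[N_1],\dots,[N_1],pt)$ equals the triple-type intersection number, which for $\gamma_1=\dots=\gamma_{k-1}=[N_1]$ and $\gamma_k = pt$ equals $1$ (the fundamental class is the unit for the quantum/classical product, and inserting one point class with $k-1$ fundamental classes evaluates to $\int_{N_1} pt = 1$). All other choices of the $\gamma_i$ in the expansion either vanish or are killed by dimension reasons once one matches them against the given insertions $[N_1]\otimes\beta_1,\dots,[N_1]\otimes\beta_{k-1}, pt\otimes\beta_k$. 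Third, substituting back, the only surviving term is $1 \cdot \Psi^{N_2}_{A_2,0,k}(pt;\beta_1,\dots,\beta_k)$, which is the right-hand side.

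The main obstacle, and the step deserving the most care, is making the "only surviving term" claim rigorous: one must check that in the K\"unneth expansion of the product invariant, when the $N_1$-insertions are $[N_1]$ in positions $1,\dots,k-1$ and $pt$ in position $k$, the sum over intermediate classes collapses to the single term with $\gamma_i = [N_1]$ for $i<k$ and $\gamma_k = pt$ on the $N_1$-side. This follows because $\Psi^{N_1}_{0,0,k}$ vanishes unless the sum of the (complex) codimensions of the $N_1$-insertions equals $\dim_{\mathbb C} N_1$ (the virtual dimension of genus-zero zero-class maps with the point class on $\overline{\mathcal M}_{0,k}$ being $\dim_{\mathbb C} N_1 + k - 3$, minus the $k-3$ moduli from fixing the curve class to a point), and pairing with the given insertions forces each dual class to be Poincar\'e dual to a multiple of the given one; a short dimension count rules out all other configurations. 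Modulo this bookkeeping — which is entirely standard and is exactly the kind of computation underlying the quantum K\"unneth formula — the lemma follows.
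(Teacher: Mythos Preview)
Your argument is sound: a genus-zero stable map to $N_1\times N_2$ in the class $0\oplus A_2$ is constant on the $N_1$-factor, so the moduli space is (virtually) $N_1\times\overline{\mathcal M}_{0,k}(N_2,A_2)$, the $[N_1]$-insertions impose no $N_1$-constraint, and the single $pt$-insertion integrates to $1$ over the $N_1$-factor, leaving exactly $\Psi^{N_2}_{A_2,0,k}(pt;\beta_1,\dots,\beta_k)$. That is the right mechanism.

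The comparison with the paper is brief: the paper does not give a proof at all --- it simply cites \cite[Proposition~7.4]{LU06}. What you have written is, in effect, a sketch of the argument behind that cited proposition, so you are supplying strictly more than the paper does. One small point of care: your phrasing of the K\"unneth-type product formula as ``a sum over splittings of the point class on the moduli space'' is not quite the standard formulation; the product axiom is usually stated via the diagonal class of $\overline{\mathcal M}_{0,k}$, and when the curve-class insertion is $pt$ it collapses directly to the single product $\Psi^{N_1}_{0,0,k}(pt;[N_1],\dots,[N_1],pt)\cdot\Psi^{N_2}_{A_2,0,k}(pt;\beta_1,\dots,\beta_k)$ with no residual sum. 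Your computation that the first factor equals $1$ (constant maps, fundamental-class insertions, one point constraint) is correct, and the lemma follows.
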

\begin{proof}
See  \cite[Proposition 7.4]{LU06}.
\end{proof}

\section{Hermitian positive Jordan triple system}\label{hpjts}

We refer the reader  to \cite{roos} (see also \cite{loos}) for more details on Hermitian symmetric spaces of noncompact type 
(HSSNT) and Hermitian positive Jordan triple systems (HPJTS).

\vskip 0.3cm

\noindent {\bf Definitions and notations.}
A Hermitian Jordan triple system is a  pair $\left({\mathcal M},
\{ ,  ,\}\right)$, where ${\mathcal M}$ is a complex vector space and $\{
,  ,\}$ is a map
\[
\{ ,  ,\}:{\mathcal M}\times {\mathcal M}\times {\mathcal M} \rightarrow {\mathcal M}
\]
\[
\left(u, \,  v, \,  w\right)\mapsto \{ u, \,  v, \,  w\}
\]
which is ${\C}$-bilinear and symmetric in $u$ and $w$, ${\C}$-antilinear in $v$ and such that the following \emph{ Jordan identity} holds:

\begin{equation*}\begin{split}
\left\{ x,  \, y, \,  \left\{ u, \,  v,  \, w\right\} \right\}  & -\left\{ u,  \, v, \,  \left\{ x,  \, y, \,  w\right\} \right\}= \\
& =\left\{ \left\{ x, \,  y, \,  u\right\} , \,  v,
 \, w\right\} -\left\{ u, \,  \left\{ v,  \, x,  \, y\right\} ,  \, w\right\}.
\end{split}\end{equation*}

For $x, \, y, \, z \in \M$ consider  the  operators
\[
T\left(x, \, y\right)z =\left\{  x, \, y, \, z\right\} 
\]
\[
Q\left(x, \, z\right) \, y =\left\{  x, \, y, \, z\right\}  
\]
\[
Q\left(x, \, x\right) =2\,Q\left(x\right)\label{D3}\\
\]
\[
B\left(x, \, y\right) =\operatorname{id}_{\mathcal M}-T\left(x, \, y\right)+Q\left(x\right)Q\left(y\right). \label{D4}
\]
The operator $B\left(x, \, y\right)$  is called the \emph {Bergman operator}. A Hermitian Jordan triple system is called \emph{positive} if the sesquilinear form 
\begin{equation}\label{D5}
\left(  u\mid v\right)  =\frac{1}{\gamma}\tr T\left(u, \, v\right)
\end{equation}
is a Hermitian product, where $\gamma$ is a positive constant called the \emph{genus} of 
$\left({\mathcal M},
\{ ,  ,\}\right)$.

\vskip 0.3cm

\noindent{\bf HSSNT associated to HPJTS.}
M. Koecher (\cite{Koecher1}, \cite{Koecher2}) discovered that to every HPJTS
$\left(\M, \{ ,  ,\}\right)$ one can associate an Hermitian symmetric
space of noncompact type, in its realization as circled\footnote{The domain $\Omega \subset \M$ is circled if $e^{i \theta}\cdot \Omega = \Omega$} bounded symmetric domain 
$\Omega_{\mathcal M}$ centered at the origin $0\in \M$. 
More precisely, $\Omega_{\mathcal M}$  is defined as the connected component containing the origin of   the set of all 
$u\in {\M}$ such that $B\left(u,  \, u\right)$ is positive definite with respect to the Hermitian product \eqref{D5}.

\vskip 0.3cm

\noindent{\bf HPJTS associated to HSSNT.} 
The HPJTS $\left({\M}, \{ ,  ,\}\right)$ can be recovered from its
associated HSSNT $\Omega_{\mathcal M}$  by defining ${\M}=T_0 \Omega_{\mathcal M}$ (the tangent space to the origin of $\Omega_{\mathcal M}$) and
\begin{equation*}\label{trcurvbis}
\{u,  \, v, \,  w\}=-\frac{1}{2} \, \left(R_0\left(u,  \, v\right) \, w+J_0 \, R_0\left(u, \,  J_0\,v\right)w\right),
\end{equation*}
where $R_0$ (resp. $J_0$) is the curvature tensor of the Bergman metric (resp. the complex structure) of $\Omega_{\mathcal M}$ evaluated at the origin.
The reader is referred   to  Proposition III.2.7 in  \cite{Bertram} for details. More informations on the correspondence between
HPJTS and HSSNT can also be found at p. 85 of Satake's book \cite{satake}.

\vskip 0.3cm

\noindent {\bf Spectral decomposition.}
Let $\left({\M}, \{ ,  ,\}\right)$ be a HPJTS. An element $c \in \M$ is called \emph {tripotent} if
$\{c, \, c, \, c\}=2 \, c$. Two tripotents $c_1$ and $c_2$ are called \emph {(strongly)
orthogonal} if $T\left(c_1,  \, c_2\right)=0$. Each element $v\in \M$ has a unique \emph{spectral decomposition}
\[
v=\lambda_{1} \, c_{1}+\cdots+\lambda_{s} \, c_{s}\qquad\left(\lambda_{1}>\cdots
>\lambda_{s}>0\right), \label{D7}
\]
where $\left(c_{1},\ldots, \, c_{s}\right)$ is a sequence of pairwise
orthogonal (with respect to (\ref{D5})) tripotents and the $\lambda_j$'s are real numbers called eigenvalues of $v$. The integer $s$  is  called the \emph{rank} of $v$ and is denoted by $\rk( v)$.
The \emph{rank} of $\M$  is the positive integer $r$  defined as  $r= \max \{\rk( z)\, |\, z \in \M    \}$. 
The elements $z\in\M$ such that $\operatorname{rk}(z)=r$ are called \emph{regular}.

Let us denote by   $\nax{v}$ the largest eigenvalue of $v$.
Due to the  convexity of $\Omega_{\mathcal M}$, $\nax{v}$ is  a norm on $\M$,
called the \emph{spectral norm}. 
The following proposition provides a description of  the domain $\Omega_{\mathcal M}$ in terms of its spectral norm.
\begin{prop}
Let $\Omega_{\mathcal M}\subset\M$ be the HSSNT associated to $\left({\M}, \{,  ,\}\right)$. Then 
\begin{equation}\label{Mball}
\Omega_{\mathcal M}=\{v \, |  \nax{v}<1\}.
\end{equation}
\end{prop}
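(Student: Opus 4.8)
The plan is to diagonalise the Bergman operator $B(v,v)$ on a Peirce decomposition adapted to $v$: this turns the defining condition ``$B(v,v)$ positive definite'' into an inequality on the eigenvalues of $v$, after which a soft topological argument identifies the connected component of $0$. First I would recall from \cite{loos} (see also \cite{roos}) the two standard structural facts that are needed. (i) Every $v\in\M$ can be \emph{diagonalised}: there are a frame $e=(e_1,\dots,e_r)$ of $\M$ (a maximal family of pairwise orthogonal tripotents, $r=\rk\M$) and reals $\mu_1\ge\cdots\ge\mu_r\ge 0$ with $v=\sum_{j=1}^r\mu_j e_j$; comparing with the spectral decomposition, the nonzero $\mu_j$ are exactly the eigenvalues of $v$ repeated with multiplicity, so $\mu_1=\nax{v}$. (ii) Such a frame induces a \emph{simultaneous Peirce decomposition}
\[
\M=\bigoplus_{0\le i\le j\le r}\M_{ij}(e),
\]
orthogonal for the Hermitian product \eqref{D5}, where $\M_{ij}(e)$ is the joint eigenspace on which each $T(e_k,e_k)$ acts by the scalar $\delta_{ik}+\delta_{jk}$ (the index $0$ being disregarded); in particular $e_j\in\M_{jj}(e)$, so $\M_{jj}(e)\neq 0$ for every $j=1,\dots,r$.

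The computational heart is then the classical identity
\[
B(v,v)\big|_{\M_{ij}(e)}=(1-\mu_i^2)(1-\mu_j^2)\,\mathrm{id}_{\M_{ij}(e)},\qquad 0\le i\le j\le r,\qquad \mu_0:=0,
\]
which one proves by expanding $B(v,v)=\mathrm{id}-T(v,v)+Q(v)^2$ and applying the Peirce multiplication rules for the operators $T(e_k,e_k)$ and $Q(e_k,e_l)$ — the one genuinely Jordan-theoretic step, which I would either carry out or quote from \cite{loos}. Granting it: if $\nax{v}=\mu_1<1$ then $1-\mu_j^2>0$ for every $j\in\{0,1,\dots,r\}$, so every block $(1-\mu_i^2)(1-\mu_j^2)$ is strictly positive and $B(v,v)$ is positive definite. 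Hence the open ball $D:=\{\,u\in\M:\nax{u}<1\,\}$ is contained in $U:=\{\,u\in\M:B(u,u)\text{ is positive definite}\,\}$.

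It remains to see that $D$ is precisely the connected component of $0$ in $U$, which is by definition $\Omega_{\M}$. The set $D$ is open in $\M$, contains $0$, and is star-shaped about $0$ (if $\nax{u}<1$ and $t\in[0,1]$ then $\nax{tu}=t\,\nax{u}<1$), hence connected. Moreover $D$ is closed in $U$: if $v_n\in D$ with $v_n\to v$ and $v\in U$, then $\nax{v}\le 1$ by continuity of the spectral norm, and if we had $\nax{v}=\mu_1=1$ then, in a diagonalising frame for $v$, the identity above would force $B(v,v)|_{\M_{11}(e)}=(1-\mu_1^2)^2\,\mathrm{id}=0$ on the nonzero space $\M_{11}(e)$, contradicting $v\in U$; thus $\nax{v}<1$, i.e.\ $v\in D$. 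Therefore $D$ is a nonempty, connected, relatively clopen subset of $U$ containing $0$, so it equals the connected component of $0$ in $U$, namely $\Omega_{\M}$, which proves \eqref{Mball}. The only step that is not essentially formal is the Bergman-operator eigenvalue formula, and that is where I expect the real work (and the whole structure theory of HPJTS) to sit; once it is available, everything else is elementary point-set topology together with the existence of diagonalising frames.
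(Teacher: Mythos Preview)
Your argument is correct: the Peirce decomposition together with the eigenvalue formula $B(v,v)|_{\M_{ij}(e)}=(1-\mu_i^2)(1-\mu_j^2)\,\mathrm{id}$ is exactly the standard route, and your clopen-in-$U$ argument cleanly identifies the right connected component. Note, however, that the paper does not give a proof at all: it simply refers to \cite[Corollary~3.15]{loos}. What you have written is, in effect, a sketch of the proof one finds in Loos, so there is no genuine difference in approach --- you have just unpacked the citation. The one place to be careful if you want a self-contained write-up is the Bergman-operator eigenvalue identity, which (as you acknowledge) requires the Peirce multiplication rules for $Q$; everything else is indeed elementary.
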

\begin{proof}
See \cite[Corollary 3.15]{loos}.  
\end{proof}

\section{Cartan domains, their compact duals and some  symplectic embeddings}\label{sectionembeddings}
Let  $\left({\mathcal M},
\{ ,  ,\}\right)$ be a HPJTS and $\Omega_{\mathcal M}$ be its associated HSSNT.
Let $n$ be the complex dimension of $\M$.
By fixing an orthonormal basis
$\underline{e}=\{e_1, \dots , e_n\}$  of 
 $\left({\mathcal M},
\left(\cdot\mid \cdot\right) \right)$ 
we get the identification 
\begin{equation}\label{ident}
\M\rightarrow\C^{n},\  \  v\stackrel{\underline{e}}{\mapsto} z=(z_1, \dots, z_n),\ \  v=z_1e_1+\cdots  +z_ne_n ,
\end{equation}
which induces  an isometry 
between 
$(\M, \left(\cdot\mid \cdot\right)  )$
and $(\C^n, h_0)$, where $h_0$ is the canonical Hermitian product on $\C^n$.
Under the identification 
\[(z_1, \dots , z_n)=(x_1, y_1, \dots ,x_n, y_n)\] between $\C^n$ and $\R^{2n}$ 
we have  $h_0=g_0+i\omega_0$, where $g_0=\sum_{j=1}^ndx_j^2+dy_j^2$
is the standard  scalar product on $\R^{2n}$ and $\omega_0$ is the canonical symplectic form 
$\omega_0=\sum_{j=1}^ndx_j\wedge dy_j$
on $\C^n=\R^{2n}$.
From now on we assume $\M$ is  \emph{simple} which is equivalent
to the irreducibility of $\Omega_{\mathcal M}$.
Then, under the previous identification, 
the HSSNT $\Omega_{\mathcal M}$  corresponds  to a  bounded symmetric  domain $\Omega=\overline{e}(\Omega_{\mathcal M})\subset \C^n$.
The complex and Riemannian geometry of these domains, also called \emph{Cartan domains}, is well-known (see, e.g. \cite{Kobayashi}), 
Below, we describe some symplectic geometric aspects of these domains  
and their compact duals needed in this paper (for the concept of compact  dual see \cite{Helgason}
or  \cite{DiScalaLoi08} and references therein).

Let $\Omega\subset\C^n$ be a Cartan domain and let   $M$ be its compact dual. Then $M$ is an $n$-dimensional  HSSCT. Denote by
\begin{equation}\label{BW}
BW:M\rightarrow {\C}P^N
\end{equation}
 the \emph{Borel--Weil} (holomorphic)
embedding. It is well-known (see e.g. \cite{ta}) that the
pull-back $BW^*\omega_{\FS}$ of the Fubini--Study form
$\omega_{\FS}$ of ${\C}P^{N}$ is a homogeneous \K\--Einstein
form on $M$ ($\omega_{\FS}$ is the  \K\ form which, in the
homogeneous coordinates $[z_0,\dots, z_N]$ on ${\C}P^{N}$, is
given by $\omega_{\FS}=\frac{i}{2}
\partial\bar\partial\log (|z_0|^2+\cdots +|z_N|^2)$).
In this paper, we denote (with a
slight abuse of notation and terminology) by $\omega_{\FS}$ the
form $BW^*\omega_{\FS}$ and call it the {\em Fubini--Study
form} on $M$.  The symplectic form $\omega_{FS}$ can be equivalently described as the symmetric or canonical   form on $M$
normalized  so that $\omega_{FS} (A)=\pi$ for the generator  $A\in H_2(M, \z)$.

\vskip 0.3cm
\noindent
{\bf The domain $\left( \ncpt, \omega_0\right)$ can be embedded into $\left( M, \omega_\FS\right) $.}

\noindent
Let $(\Omega, \omega_0)$, $\Omega\subset\C^n$, be a Cartan domain equipped with the canonical symplectic  form $\omega_0$ of $\R^{2n}$
and  let $(M, \omega_{FS})$ be  its compact dual. 
In \cite{DiScalaLoi08} the first author in collaboration with A. J. Di Scala, by using HPJTS,  construct  an embedding
\begin{equation}\label{symplecticdual}
\Phi_{\Omega}:  \Omega   \f   M
\end{equation}
such that $\Phi_{\Omega}^*\omega_{FS}=\omega_0$.

Actually in \cite{DiScalaLoi08} much more is proved, namely that   
the embedding  $\Phi_{\Omega}$ induces a global  symplectomorphism
\[\Phi_{\Omega}: (\Omega, \omega_0)\rightarrow (M \setminus \Cut_0(M), \omega_{FS})\]
where $\Cut_0(M)$ is the cut locus of $(M, \omega_{FS})$ with respect to a fixed point $0\in M$ (see \cite[Theorem 1.1]{DiScalaLoi08}).
This diffeomorphism has been  christened in 
\cite{DiScalaLoi08}   as  a   \emph{symplectic duality} 
due to the fact that, amongst other properties,  it also satisfies 
$\Phi_{\Omega}^*\omega_0=\omega_{hyp}$, 
where $\omega_0$ denotes the standard form on $\C^n\cong M \setminus \Cut_0(M)$
and $\omega_{hyp}$ is  the hyperbolic metric on
 $\Omega$ (see either  \cite{DiScalaLoi08}  or \cite{DiScalaLoiRoos} for details  and  also \cite{CuccuLoi06},
  \cite{DiScalaLoiZuddas}, \cite{Calihom}, \cite{Riccisolitons}, \cite{diastexp},  \cite{LoiZuddas08} and \cite{mossa}  
  for  the construction of explicit  symplectic coordinates.

\begin{remark}\rm
In  \cite[Lemma 4.1 in Section 4]{LU06} it is shown the existence of a symplectic embedding
\begin{equation}\label{symplecticdualfirst}
\Phi_{\Omega_I[k, n]}: \Omega_I[k, n]\rightarrow G(k, n)
\end{equation}
from the first Cartan domain
$\Omega_I[k, n]\subset\C^{k(n-k)}$ into 
its compact  dual $G(k, n)$,  namely the   complex Grassmannian 
 of $k$ dimensional subspaces of $\C^n$.
Our result  (\ref{symplecticdual}) extends Lu's results to all HSSCT.
 \end{remark}

\noindent
{\bf The unitary   ball $(B^{2n}(1), \omega_0)$ can be   embedded into   $\left( \ncpt, \omega_0\right)$}.

\noindent
Let $v=\lambda_{1} \, c_1+\cdots+\lambda_{r}\,c_r$  be the spectral decomposition of a regular point $v\in \Omega_{\mathcal M}\subset\M$, then the distance $\di_0 (0,v)$ 
from the origin $0\in {\mathcal M}$ to $v$
 is given by 
\begin{equation}\label{flat distance}
\di_0( 0,v) = (v\mid v)^{\frac{1}{2}} = \sqrt{\sum_{j=1}^r\lmb^2_j},
\end{equation}
(see \cite[Proposition VI.3.6]{roos} for a proof).
Since  the set of regular points of $\M$ is dense (\cite[Proposition IV.3.1]{roos}) we conclude, by  \eqref{Mball}
and by the identification  $\Omega_{\mathcal M}\cong \Omega$ (induced by $(\M, \left(\cdot\mid \cdot\right))\cong(\C^n, h_0)$)
that  
\begin{equation}\label{ballintoomega}
(B^{2n}(1), \omega_0)\subset (\Omega, \omega_0).
\end{equation}
\endproof

 \begin{remark}\rm
The inclusion  (\ref{ballintoomega}) has been obtained in \cite[Lemma 4.2, Section 4]{LU06}
for  the case of the first Cartan domain, namely $B^{2k(n-k)}(1)\subset\Omega_I[k, n]$  (see also  \cite{LuDingQjao} for the case of  classical Cartan domains).
Combining this with the symplectic embedding (\ref{symplecticdualfirst}) 
Lu was able see \cite[Theorem 1.35]{LU06} to obtain the upper bound
\[F(G(k, n), \omega_{FS})\leq [n/k],\]
where $F(N, \omega)$ denotes the \emph{Fefferman invariant} of a closed symplectic manifold $(N, \omega)$, namely
the largest integer $p$ for which there exists a symplectic packing by $p$ open unit balls, and $[n/k]$ is  the largest integer less than or equal to $n/k$.
The authors believe it  is an intriguing problem (by using the techniques of this paper)
to give a similar  upper bound for all HSSCT.
\end{remark}

\noindent
{\bf  The domain $\left( \Omega, \omega_0\right)$ can be embedded into  $(Z^{2n}(1), \omega_0)$.}

\noindent
Let  $Z^{2n}(1)=\{(x, y)\ | \ x_1^2+y_1^2<1\}$ be the unitary cylinder in $\R^{2n}$.
Let $v=\lambda_{1} \, c_1+\cdots+\lambda_{r}\,c_r$ be the spectral decomposition of a regular point $v\in \Omega_{\mathcal M}\subset\M$. By \eqref{flat distance} and by the continuity of $\di _0$ (the distance function from the origin $0\in {\mathcal M}$) we see that $\di_0(0,\, c_1)=1$. Set $c:=c_1$, by \cite[Corollary 4.8]{loos} $c \in \de \Omega_{\mathcal M}$. As $\Omega_{\mathcal M}$ is convex (\cite[Corollary 4.7]{loos}), by the supporting hyperplane property there exists a real hyperplane $\pi$ of $\M$ through $c$ not intersecting 
$\Omega_{\mathcal M}$. Denote by   $p=\overline{e}(c)\in\de\Omega$ the  image of the  tripotent $c$ by  the isometry  (\ref{ident}).  Hence $p\in  S^{2n-1}$,
 where $S^{2n-1} = \de B^{2n}(1)$ is the  $\left( 2n-1\right) $-dimensional unit  sphere centered at the origin of $\R^{2n}$.
By (\ref{ballintoomega}), $B^{2n}(1) \subset \Omega=\overline{e}(\Omega_{\mathcal M})$ and hence  $\overline{e}(\pi)=T_{p}\, S^{2n-1}$. 
By applying the same argument to  any tripotent $c_{\theta}:=e^{i\theta}\cdot c$, we see that $\ncpt$ is contained  in the cylinder $\tilde Z$ bounded by the envelope of the family of real hyperplanes $\left\{  T_{p_{\theta}}\, S^{2n-1}, p_{\theta}=\overline{e}(c_{\theta})\right\}_{\theta \in\R }$. Let $W \in U(n)$ such that  
\begin{equation*}\label{assume}
W\cdot p  = \left( z_1,\, 0, \dots,\, 0\right)
\end{equation*}
\noindent for some $z_1 \in \C$, $\|z_1\|=1$.
It follows that  $W \cdot \tilde  Z = Z^{2n}(1)$ and 
the desired symplectic embedding of $(\Omega, \omega_0)$ into $(Z^{2n}(1), \omega_0)$ is
given by 
\begin{equation}\label{omegaintocil}
\Omega\subset\tilde Z\stackrel{W}{\rightarrow}Z^{2n}(1).
\end{equation}

\begin{remark}\rm
A similar (symplectic) embedding 
$(\Omega, \omega_0)\hookrightarrow (Z^{2n}(1), \omega_0)$
has been considered  in  \cite{LuDingQjao} for the classical Cartan domains.
\end{remark}

\subsection{Minimal symplectic  atlases of HSSCT}\label{sectionminatlas}
Consider a closed symplectic manifold $(M, \omega)$.
In  \cite{minatlas} Yu. B. Rudyak and F. Schlenk  have introduced  
the  \emph{symplectic Lustermik-Schnirelmann category} $S(M, \omega )$,
defined as
\[S(M, \omega)=\min \{k \ | \ M={\mathcal U}_1\cup\cdots \cup {\mathcal U}_k\}\]
where each ${\mathcal U}_i$ is the image $\Phi_i(U_i)$ of a symplectic embedding
$\Phi_i:U_i\rightarrow{\mathcal U}_i\subset M$ of a bounded subset $U_i$ of $(\R^{2n}, \omega_0)$
diffeomorphic to an open ball in $\R^{2n}$.
From our results  one obtains
the upper bound 
\begin{equation}\label{uppS}
S(M, \omega_{FS})\leq N+1
\end{equation}
for  Lustermik-Schnirelmann category of a Hermitian symmetric space of compact type $(M, \omega_{FS})$,
where $N$ is the dimension of the complex projective space $\C P^N$ where the manifold can be \K\ embedded 
via the  Borel--Weil embedding  $BW:M\rightarrow {\C}P^N$ (see (\ref{BW})).
Indeed, as in the case of the complex Grassmannian $G(k, n)$ (where  the Borel--Weil  embedding is given by the   Pl\"ucker embedding
$P:G(k, n)\rightarrow \CP^{\left({n\atop k}\right)-1}$),
one can define a \emph{ canonical atlas} on $(M, \omega_{FS})$ using the   $N+1$ holomorphic charts $\Omega_0, \dots , \Omega_{N}$ defined as   $\Omega_j=M\setminus\{BW^{-1}(Z_j=0)\}$,  and $Z_j=0$, $j=0, \dots, N$, is the standard hyperplane of $\CP^N$. Each $\Omega_j\subset\C^n$, $j=0, \dots , N$, is  biholomorphic to the noncompact dual $\Omega$ of $M$.  It follows by  (\ref{symplecticdual}) that 
$(\Omega_j, \omega_0)$ can be symplectically embedded into $(M, \omega_{FS})$ for $j=1, \dots ,N$.
On the other hand, each $\Omega$ is  a bounded domain  diffeomorphic to the ball in $\R^{2n}$
and so  (\ref{uppS}) follows. 
Our knowledge of the Gromov width of any HSSCT $(M ,\omega_{FS})$ can be used to estimate and compute the  minimal numbers of   Darboux charts needed to cover 
$M$. This number, introduced in \cite{minatlas}  and denoted there by $S_B(M, \omega)$, has been computed and estimated for various symplectic manifolds including  the complex Grassmannian (see \cite[Corollary 5.10]{minatlas}).
Similar computations and related problems (which will appear in a forthcoming paper)  can be done for all HSSCT using the results of this section.

\section{The proofs of Theorems \ref{main},  \ref{main2}, \ref{main3}, \ref{main4} and \ref{main5}}\label{sectionproofs}
The following lemma is the  key ingredient to achieve   the upper bound  of Gromov width in 
Theorems \ref{main},  \ref{main2} and \ref{main3}.

\begin{lem} \label{gromovwittenHSSCT}
Let $(M, \omega_{FS})$ be an irreducible HSSCT of complex dimension $n$ and let $A=[\CP^1]$
be a generator of $H_2(M, \z)$ such that $\omega_{FS}(A)=\pi$.
Then there exist $\alpha  {(M, \omega_{FS})}$ and $\beta (M, \omega_{FS})$ in $H_*(M, \z)$ such that
\[\dim \alpha  {(M, \omega_{FS})} + \dim\beta  {(M, \omega_{FS})}=4n-2c_1(A)\]
and
\begin{equation}\label{nonzeroGW}
\Psi_{A, 0, 3}(pt; \alpha  {(M, \omega_{FS})},  \beta  {(M, \omega_{FS})}, pt)\neq 0 .
\end{equation}
\end{lem}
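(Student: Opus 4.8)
The plan is to reduce the statement to a classical fact about quantum cohomology of Hermitian symmetric spaces, via the correspondence between nonvanishing genus-zero three-point Gromov--Witten invariants and structure constants of the (small) quantum product. First I would recall that $A=[\CP^1]$ is the generator of $H_2(M,\z)$, that $M$ is Fano (indeed monotone, being K\"ahler--Einstein with positive scalar curvature), and that $A$ is an indecomposable spherical class: any HSSCT has $H_2(M,\z)\cong\z$ generated by $A$ with $\omega_{FS}(A)=\pi>0$, so $A$ admits no decomposition into spherical classes of positive area. Hence by Lemma \ref{lemmaugGW} the Liu--Tian invariant $\Psi_{A,0,3}(pt;\cdot,\cdot,\cdot)$ coincides with the McDuff--Salamon invariant $\Psi_{A,0,3}(\cdot,\cdot,\cdot)$, and it suffices to exhibit classes $\alpha,\beta\in H_*(M,\z)$ with $\Psi_{A,0,3}(\alpha,\beta,pt)\neq 0$ and the stated dimension count $\dim\alpha+\dim\beta=4n-2c_1(A)$.

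Next I would produce $\alpha$ and $\beta$. The dimension condition is exactly the condition for the three-point invariant with insertions $\alpha$, $\beta$, $pt$ to have the correct (virtual) dimension: the expected dimension of $\overline{\mathcal M}_{0,3}(M,A)$ evaluation constraints forces $\operatorname{codim}\alpha+\operatorname{codim}\beta+\operatorname{codim}(pt) = 2n + 2c_1(A)$, i.e. $\dim\alpha+\dim\beta = 4n - 2c_1(A)$ after using $\operatorname{codim}(pt)=2n$. So any geometrically meaningful count of lines through a point automatically satisfies it; I only need the count to be nonzero. The cleanest route is: take $\beta=pt$ as well and $\alpha=[M]$ is too large, so instead take $\alpha$ to be the point class and $\beta$ the class of a hypersurface-type cycle — more precisely, I would invoke the well-known fact that through a generic point of an irreducible HSSCT there passes a line (a rational curve in class $A$), and in fact the variety of lines through a fixed point is nonempty of the expected dimension. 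Dually, this says the quantum product $pt * pt$ has a nonzero term in the $A$-summand; equivalently there is a class $\gamma$ with $\langle pt,pt,\gamma\rangle_A\neq 0$, where $\gamma$ is Poincar\'e dual to that term. Setting $\alpha=pt$ and $\beta=\gamma$ gives \eqref{nonzeroGW}, and the dimension identity holds by the degree reasons above.

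To make the nonvanishing rigorous rather than heuristic I would appeal to known computations of the quantum cohomology of HSSCT: for the classical series these are the Grassmannian computations used by Lu (whose Lemma is the $G(k,n)$ case of exactly this statement), and for all irreducible HSSCT — including the two exceptional ones $E_6/\mathrm{Spin}(10)\cdot U(1)$ and $E_7/E_6\cdot U(1)$ — the quantum Chevalley/Pieri formula of Chaput--Manivel--Perrin (building on Fulton--Woodward and Kresch--Tamvakis) shows that the quantum product of the point class with itself has a nonzero coefficient of $q$ (the degree-one quantum parameter); the relevant Gromov--Witten invariant is literally one of the structure constants appearing there, and it equals a Schubert intersection number which is a positive integer. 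I would state this as the extraction of $\alpha(M,\omega_{FS})$ and $\beta(M,\omega_{FS})$ from that formula, case by case over the classification of irreducible HSSCT (projective spaces, quadrics, Grassmannians $G(k,n)$, Lagrangian and orthogonal Grassmannians, and the two exceptional spaces), noting that in each case one verifies $\langle pt,\gamma,pt\rangle_A\neq 0$ for an explicit Schubert class $\gamma=\beta$.

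\textbf{Main obstacle.} The conceptual content is light once the quantum-cohomology input is granted; the real work — and the place I expect a referee to demand care — is the uniform verification of the nonvanishing \emph{across all irreducible HSSCT}, especially the exceptional ones, since there is no single closed formula and one must either quote the Chaput--Manivel--Perrin quantum Pieri rule correctly in each case or argue enumeratively that a line passes through a generic point (which requires knowing that HSSCT are covered by lines, a consequence of homogeneity and the Borel--Weil embedding being an embedding by a complete linear system of a very ample line bundle with $-K_M$ a positive multiple of it). A secondary, more routine point is checking that the indecomposability and monotonicity hypotheses of Lemma \ref{lemmaugGW} genuinely hold so that the Liu--Tian and McDuff--Salamon invariants coincide; this is immediate from $H_2(M,\z)=\z\langle A\rangle$ and $M$ being K\"ahler--Einstein of positive scalar curvature, but it should be recorded explicitly.
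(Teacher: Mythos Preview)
Your overall architecture matches the paper's proof exactly: observe that $(M,\omega_{FS})$ is monotone (K\"ahler--Einstein) and $A$ is indecomposable (since $H_2(M,\z)=\z\langle A\rangle$), invoke Lemma~\ref{lemmaugGW} to pass to McDuff--Salamon invariants, and then extract a nonzero $\Psi_{A,0,3}(\alpha,\beta,pt)$ from the known quantum cohomology of each irreducible HSSCT, case by case over the classification. The references you cite (Chaput--Manivel--Perrin, Fulton--Woodward, Kresch--Tamvakis) are essentially the ones the paper uses, supplemented by Siebert--Tian for Grassmannians, Beauville for quadrics, and Nishimori--Ohnita for the DIII type.

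There is, however, a genuine gap in your concrete choice of classes. You propose to take $\alpha=pt$ and look for $\beta=\gamma$ with $\langle pt,pt,\gamma\rangle_A\neq 0$, arguing that ``lines pass through a generic point'' implies ``$pt*pt$ has a nonzero $q$-term''. That implication is false, and the choice $\alpha=pt$ does not work uniformly. Two concrete failures:
\begin{itemize}
\item For the smooth quadric $Q^n$ ($n\geq 2$) one has $c_1(A)=n$, so the dimension condition forces $\dim\beta=2n$, i.e.\ $\beta=[M]$; but $\Psi_{A,0,3}(pt,[M],pt)$ counts lines on $Q^n$ through two generic points, and two generic points of $Q^n$ do not lie on a common line (the chord meets $Q^n$ only in those two points), so this invariant is zero.
\item For the Lagrangian Grassmannian $LG(n,2n)$ with $n\geq 3$ (dimension $N=n(n+1)/2$, index $n+1$), the required $\dim\beta=4N-2(n+1)=2(n-1)(n+1)$ exceeds $2N$, so no such $\beta$ exists at all.
\end{itemize}
The existence of lines through a point only tells you that \emph{some} three-point invariant $\langle pt,\beta_1,\beta_2\rangle_A$ with a single $pt$ insertion is nonzero; it says nothing about $pt*pt$. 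The paper accordingly does \emph{not} fix $\alpha=pt$: for each type it reads off from the quantum product a pair $\alpha,\beta$ (neither of which need be a point class) with $\Psi_{A,0,3}(\alpha,\beta,pt)\neq 0$. If you drop the insistence on a second point insertion and simply extract $\alpha,\beta$ from the quantum Chevalley/Pieri formulas in each case, your argument becomes correct and coincides with the paper's.
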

\begin{proof}

Since the canonical symplectic form $\omega_{FS}$ is \K-Einstein, it follows that $(M, \omega_
{FS})$ is monotone, so that Lemma \ref{lemmaugGW}  applies under our assumptions. We need then to show the existence, for every irreducible HSSCT, of a non-vanishing Gromov-Witten invariant $\Psi_{A,0,3}(\alpha
(M, \omega_{FS}), \beta(M, \omega_{FS}), pt)$. This follows from the results about the {\it quantum cohomology} of these spaces proved in \cite{quantumquadric}, \cite{quantumexc}, \cite{quantumLagGrass}, \cite{quantumDIII}, \cite{SIEBTIAN}. 
Let us recall that the quantum cohomology ring of $M$ is the product $H_*(M) \otimes \z[q]$ endowed with the {\it quantum cup product}, defined for any two homology classes $\alpha, \beta \in H_*(M)$ as

\begin{equation}\label{qprod}
\alpha * \beta = \sum_{\gamma, d} \Psi_{dA,0,3}(\alpha, \beta, \gamma) \gamma^* q^d,
\end{equation}

\noindent the sum running over $d \in \z$ and $\gamma$ such that $\dim(\alpha) + \dim(\beta) + \dim(\gamma) = 4n - 2 dc_1(A)$, 
where $\gamma^*$ denotes the dual class of $\gamma$. 

\noindent Looking at the formulas for the quantum product proved in the above-mentioned references, it is not hard to find a Gromov-Witten invariant $\Psi_{A,0,3}(\alpha, \beta, pt)$ which does not vanish for some classes $\alpha$, $\beta$. More in detail, when $M$ is the Grassmannian $G(k,n)$, by \cite{SIEBTIAN} there exist $\alpha \in H_{2k(n-1)}(M)$ and $\beta \in H_{2n(k-1)}(M)$ such that this holds; by \cite{quantumDIII} the same is true for suitable $\alpha = \beta \in H_{(n-1)(n-2)}(SO(2n)/U(n))$; by Corollary 8 in \cite{quantumLagGrass} $\alpha$ and $\beta$ can be taken of codimension $n$ and $1$ when $M$ is the Lagrangian Grassmannian $LG(n, 2n)$; in 
\cite{quantumexc} (see the formulas in Sections 5.1 and 5.2) it is shown that for the Cayley plane (resp. for the Freudenthal variety) one can take for example $\alpha$ and $\beta$ of codimensions 8 and 4, (resp. of codimensions 13 and 5). Finally, in \cite{quantumquadric} is studied the quantum cohomology of complete intersections, which in particular gives a non-vanishing Gromov-Witten invariant for the complex quadric.
\end{proof}

\begin{remark}\rm
\noindent Formulas for quantum products in the homogeneous spaces, expressed in terms of the combinatorial invariants of the Lie algebra of the symmetry group of the space (Dynkin diagram and Weyl group), can be found in \cite{FULTON04bis} (see also \cite{FULTON97}, \cite{FULTON04}) and could be also used to prove the above Lemma.
\end{remark}
We are now in the position to prove Theorem \ref{main}. 
\begin{proof}[Proof of Theorem \ref{main}]
In order to use  Lemma \ref{C2GW} we can assume, without loss of generality, that  $\dim M\geq 4$.
Indeed the only irreducible HSSCT of dimension $\leq 4$
are either  $(\CP^1, \omega_{FS})$ or  $(\CP^2, \omega_{FS})$
whose  Gromov width is well-known to be equal to $\pi$.
Let $A=[\C P^1]$ be the generator of $H_2(M, \z)$ as in the statement of Theorem \ref{main}. Then the value $\omega_{FS} (A)=\pi$
is clearly the infimum of the $\omega_{FS}$-areas $\omega_{FS} (B)$ of the homology classes $B\in H_2(M, \z)$ for which $\omega_{FS}(B) > 0$.

\noindent By Lemma \ref{gromovwittenHSSCT} we have $\Psi_{A, 0, 3}(pt; pt, \alpha , \beta ) \neq 0,$
with $\alpha=\alpha (M, \omega_{FS})$ and  $\beta=\beta (M, \omega_{FS})$, and hence, by definition of $GW_g$,
\begin{equation}\label{GW=GW0}
\GW (M, \omega_{FS}; pt, \gamma)=\GW_0 (M, \omega_{FS}; pt, \gamma) =\pi
\end{equation}
with $\gamma=\alpha (M, \omega_{FS})$ or  $\gamma=\beta (M, \omega_{FS})$.
It follows by the  inequalities
(\ref{basiciC2}), (\ref{cG<C2}), (\ref{CHZGW}) and   (\ref{CHZGW0}) that 
\begin{equation}\label{upperboundcG}
c_G(M, \omega_{FS})\leq C_{HZ}^{(2)}(M ,\omega_{FS}; pt, \gamma) \leq
C_{HZ}^{(2o)}(M ,\omega_{FS}; pt, \gamma)\leq\pi 
\end{equation}
with $\gamma=\alpha (M, \omega_{FS})$ or  $\gamma=\beta (M, \omega_{FS})$.
Combining this with the lower bound $c_G(M, \omega_{FS})\geq\pi$ coming from the inclusion $B^{2n}(1)\subset(\Omega, \omega_0)$ (cfr. (\ref{ballintoomega})), the symplectic embedding $\Phi_{\Omega}:(\Omega, \omega_0)\rightarrow (M, \omega_{FS})$ (cfr. (\ref{symplecticdual}))
and the monotonicity and nontriviality of $c_G$,  one gets:
\begin{equation}\label{variousequalities}
c_G(M, \omega_{FS})= C_{HZ}^{(2)}(M ,\omega_{FS}; pt, \gamma)=
C_{HZ}^{(2o)}(M ,\omega_{FS}; pt, \gamma)=\pi
\end{equation}
with $\gamma=\alpha (M, \omega_{FS})$ or  $\gamma=\beta (M, \omega_{FS})$.
This concludes the proof of Theorem \ref{main}.
\end{proof}

\begin{remark}\rm
Observe that we have  proven  more than stated in Theorem \ref{main}. Indeed,  we have computed   the value of
Lu's pseudo symplectic capacities evaluated  at the homology class of a point and at $\alpha  {(M, \omega_{FS})}$ (or $\beta {(M, \omega_{FS})}$), namely
\[c_G(M, \omega)= C_{HZ}^{(2)}(M ,\omega; pt, \alpha  {(M, \omega_{FS})})=C_{HZ}^{(2o)}(M ,\omega; pt, \alpha  {(M, \omega_{FS})})=\]
\ \ \ \ \ \ \ \ \ \ \ \ \ \ \ $=C_{HZ}^{(2)}(M ,\omega; pt, \beta {(M, \omega_{FS})})=C_{HZ}^{(2o)}(M ,\omega; pt, \beta  {(M, \omega_{FS})})=\pi.$

\noindent
This extends the result obtained by G. Lu for the  complex Grassmannian
(cfr.  \cite[Theorem 1.15]{LU06} for details) 
 to HSSCT.
\end{remark}

\begin{remark}\rm
An alternative proof of  the upper bound $c_G(M, \omega_{FS})\leq\pi$ in Theorem \ref{main} can be achieved by combining  Lemma \ref{gromovwittenHSSCT} with \cite[Proposition 4.1 ]{GWgrass}
which asserts  that  if $(M, \omega)$ is a symplectic manifold of (real) dimension $2n$, $A\in H_2(M, \z)$ is an indecomposable spherical class 
and $\Phi_{A, 0, 3}(pt, \alpha_0, \beta_0)\neq 0$, for suitable $\alpha_0$ and $\beta_0$ in $H_*(M, \z)$ (which necessarily satisfy $\dim \alpha_0 +\dim \beta_0=4n-2c_1(A)$) then 
$c_G(M, \omega)\leq \omega (A)$.

 Using the same idea, A. C. Castro (\cite{castro}) recently found an upper bound for the Gromov width of homogeneous spaces $G/K$, with $G = SU(n)$, endowed with a general $SU(n)$-invariant symplectic structure $\omega$, provided a technical assumption on $\omega$ aimed to assure that some homology classes are $\omega$-indecomposable.
When $G/K$ is the Grassmannian manifold, endowed with the \K-Einstein structure normalized as in our paper, we recover our result.
The same method was used also in \cite{GWcoadjoint} to bound from above the Gromov width of $G/K$ when $G$ is any simple compact group and $K$ a maximal torus in $G$ (notice that the only HSSCT satisfying these assumptions is $\CP^1$).

 In \cite{GWgrass} the exact value of the Gromov width of the Grassmannian manifold is in fact calculated by giving also the lower bound. This follows from the existence of Hamiltonian circle actions on the manifold with an isolated fixed point and isotropy weights equal to 1, which by a refined version of the equivariant Darboux theorem allows the author to find an open set of the manifold equivariantly symplectomorphic to the Euclidean ball.
This approach could probably be adapted to prove the lower bound at least for the quotients of the classical groups.
\end{remark}

In order to prove Theorem \ref{main2} we need the following lemma, interesting on its own sake,  which extends Lu's formula (20) in  \cite[Theorem 1.16]{LU06} (for the Grassmannian) to the case of HSSCT.

\begin{lem}
Let $(M, \omega_{FS})$ be a HSSCT and let $(N, \omega)$ be any closed symplectic manifold.
Then 
\begin{equation}\label{C2oNM}
C_{HZ}^{(2o)}(N\times M, \omega\oplus a\omega_{FS}; pt, [N]\times\gamma)\leq |a|\pi
\end{equation}
for any $a\in\R\setminus\{0\}$ and $\gamma=\alpha  {(M, \omega_{FS})}$ or $\gamma=\beta  {(M, \omega_{FS})}$, 
with $\alpha  {(M, \omega_{FS})}$ and $\beta  {(M, \omega_{FS})}$ given by Lemma \ref{gromovwittenHSSCT}.
\end{lem}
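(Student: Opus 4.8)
The idea is to deduce the inequality \eqref{C2oNM} from the non-vanishing Gromov--Witten invariant on $M$ provided by Lemma~\ref{gromovwittenHSSCT} together with the product formula for Gromov--Witten invariants, Lemma~\ref{prodGromovWitten}, and then feed the result into the $\GW_0$-estimate of Lemma~\ref{C2GW}. First I would reduce to the case $a=1$ using the conformality property of $C_{HZ}^{(2o)}$: indeed $C_{HZ}^{(2o)}(N\times M,\omega\oplus a\omega_{FS};pt,[N]\times\gamma)=|a|\,C_{HZ}^{(2o)}(N\times M,\tfrac1a\omega\oplus \omega_{FS};pt,[N]\times\gamma)$, so it suffices to bound the latter capacity (with $\omega$ replaced by $\tfrac1a\omega$, still a symplectic form on $N$) by $\pi$. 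So from now on assume the form on $M$ is $\omega_{FS}$.

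Next I would produce the relevant non-vanishing invariant on the product. Let $A=[\CP^1]\in H_2(M,\z)$ be the generator with $\omega_{FS}(A)=\pi$, and let $\alpha=\alpha(M,\omega_{FS})$, $\beta=\beta(M,\omega_{FS})$ be the classes from Lemma~\ref{gromovwittenHSSCT}, so that $\Psi^M_{A,0,3}(pt;\alpha,\beta,pt)\neq 0$. Since $\omega_{FS}$ is \K--Einstein, $(M,\omega_{FS})$ is monotone, and $A$, being the positive generator of $H_2(M,\z)$, is indecomposable and spherical; hence by Lemma~\ref{lemmaugGW} the Liu--Tian invariant $\Psi^M_{A,0,3}(pt;\alpha,\beta,pt)$ agrees with the McDuff--Salamon one and is nonzero. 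Applying Lemma~\ref{prodGromovWitten} with $N_1=N$, $N_2=M$, $k=3$, $A_2=A$, and $(\beta_1,\beta_2,\beta_3)=(\alpha,\beta,pt)$ (or with a suitable reordering so that the marked point carrying the $pt\otimes(\cdot)$ factor is the last one) gives
\[
\Psi^{N\times M}_{0\oplus A,0,3}\bigl(pt;\,[N]\otimes\alpha,\ [N]\otimes\beta,\ pt\otimes pt\bigr)=\Psi^M_{A,0,3}(pt;\alpha,\beta,pt)\neq 0 .
\]
Note $pt\otimes pt$ is the point class of $N\times M$, and $[N]\otimes\gamma=[N]\times\gamma$ in the notation of the statement. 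Thus for the homology class $0\oplus A\in H_2(N\times M;\z)$, with $\omega$-area $(\omega\oplus\omega_{FS})(0\oplus A)=\omega_{FS}(A)=\pi$, we have a non-vanishing genus-zero three-point Liu--Tian invariant whose marked-point inputs are $pt$, $[N]\times\alpha$, $[N]\times\beta$ (and the curve-class input is a point in $\overline{\mathcal M}_{0,3}$, which is itself a point so the $C$ input is trivial).

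Now I would invoke the definition of $\GW_0$. By the displayed non-vanishing, taking the two marked-point classes to be $[N]\times\gamma$ (for $\gamma=\alpha$ or $\beta$) and $pt$, and absorbing the remaining input as a $\beta_i$ with $m=1$, we get $\GW_0(N\times M,\omega\oplus\omega_{FS};pt,[N]\times\gamma)\leq\pi$; and since $\pi$ is the smallest positive $\omega_{FS}$-area of a class in $H_2(M,\z)$ and areas of classes $0\oplus B$ in the product equal $\omega_{FS}(B)$ while the relevant invariants vanish for negative area, this infimum is exactly $\pi$ (though only the upper bound $\le\pi$ is needed). Since $\dim(N\times M)=\dim N+\dim M\geq 4$ (the case $\dim M\le 2$, i.e. $M=\CP^1$, being covered by the same argument once we note $\dim N\ge 2$, or handled separately), Lemma~\ref{C2GW}, specifically inequality \eqref{CHZGW0}, yields
\[
C_{HZ}^{(2o)}(N\times M,\omega\oplus\omega_{FS};pt,[N]\times\gamma)\leq\GW_0(N\times M,\omega\oplus\omega_{FS};pt,[N]\times\gamma)\leq\pi,
\]
and re-inserting the scaling by $|a|$ gives \eqref{C2oNM}.

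The main obstacle I anticipate is purely bookkeeping: making sure the indexing of marked points in Lemma~\ref{prodGromovWitten} matches the slot conventions in the definition of $\GW_0$ and of $C_{HZ}^{(2o)}$, and checking that $[N]\times\gamma$ is a legitimate (nonzero, of dimension $\le\dim(N\times M)-1$) homology class so that Lemma~\ref{C2GW} applies — this holds because $\dim([N]\times\gamma)=\dim N+\dim\gamma$ and $\dim\gamma<\dim M$ by Lemma~\ref{gromovwittenHSSCT} (each of $\alpha,\beta$ has positive codimension, since $\dim\alpha+\dim\beta=4n-2c_1(A)<4n$ forces at least one, and in fact both by the quantum-cohomology formulas, to have dimension $<2n$). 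The genuinely substantive input, non-vanishing of a three-point genus-zero invariant on $M$, has already been isolated in Lemma~\ref{gromovwittenHSSCT}, so the present lemma is really an application of the product formula.
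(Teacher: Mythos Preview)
Your proof is correct and follows essentially the same route as the paper's: obtain a nonzero three-point genus-zero invariant on $N\times M$ in the class $0\oplus A$ via Lemma~\ref{prodGromovWitten} applied to the nonvanishing invariant of Lemma~\ref{gromovwittenHSSCT}, then invoke the bound \eqref{CHZGW0} from Lemma~\ref{C2GW}. The paper handles the scaling by $a$ implicitly (the area of $0\oplus A$ for $\omega\oplus a\omega_{FS}$ is $a\pi$, and conformality takes care of the sign), whereas you reduce to $a=1$ first; and you spell out some bookkeeping (the dimension hypothesis for Lemma~\ref{C2GW}, the check that $[N]\times\gamma$ is admissible) that the paper omits---note incidentally that Lemma~\ref{C2GW} only asks that the classes be nonzero, so the codimension check on $\gamma$ is not actually needed here.
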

\begin{proof}
Since by   (\ref{nonzeroGW}) we have $\Psi^M_{A, 0, 3}(pt; \alpha,  \beta , pt))\neq 0,$
with $\alpha =\alpha (M, \omega_{FS})$ and  $\beta =\beta (M, \omega_{FS})$,
it follows by Lemma  \ref{prodGromovWitten}  that
\[\Psi^{N \times M}_{B, 0, 3}(pt; [N]\times \alpha (M, \omega_{FS}), [N]\times \beta (M, \omega_{FS}), pt )\neq 0
\]
for $B=0\times A$, where $0$ denotes the zero class in $H_2(N, \z)$ and $A$ the generator of $H_2(M, \z)$.
Hence (\ref{C2oNM})  easily follows   from (\ref{CHZGW0}) in Lemma \ref{C2GW}.
\end{proof}

\begin{proof}[Proof of Theorem \ref{main2}]
To see (\ref{cGprodHSSCT}) we assume $r>1$ because of the result in Theorem \ref{main}.
It immediately follows  from  (\ref{basiciC2}) and  (\ref{cG<C2}) in Lemma \ref{lemmasumm} and by (\ref{C2oNM}) that 
\[c_G \left( M_1\times\dots\times   M_r,  \omega_{FS}^1\oplus\dots  \oplus \omega_{FS}^r\right)\leq\pi.\]
On the other hand, 
we have the symplectic embeddings 
\[\times_{j=1}^rB^{2n_j}(1)\subset \times_{j=1}^r \Omega_j
\stackrel{\Phi_{\Omega_1}\times\cdots\times\Phi_{\Omega_r}}{\longrightarrow}  \times_{j=1}^rM_j\]
(induced by (\ref{ballintoomega}) and (\ref{symplecticdual}) respectively)
and the natural inclusion
\begin{equation}\label{ballintoball}
B^{2n_1+\cdots +2n_r}(1)\subset \times_{j=1}^rB^{2n_j}(1).
\end{equation}
Thus,
it follows  by the monotonicity and nontriviality of $c_G$ that 
\[c_G \left( M_1\times\dots\times   M_r,  \omega_{FS}^1\oplus\dots  \oplus \omega_{FS}^r\right)\geq\pi .\]
Hence (\ref{cGprodHSSCT}) follows. As we have already pointed out in the Introduction, inequality  (\ref{cGupboundprodHSSCT})
is a straightforward consequence of   (\ref{cGMprodHSSCT}) in Theorem \ref{main3}.  

 Inequality (\ref{cHZlowerboundprodHSSCT}) follows by (\ref{cGHSSCT}), by the monotonicity of $c_{HZ}$
 and  from the fact that for 
 two compact  symplectic manifolds $(N_1, \omega_1)$ and $(N_2, \omega_2)$
 \begin{equation}\label{incHZ}
 c_{HZ}(N_1\times N_2, \omega_1\oplus \omega_2)\geq c_{HZ}(N_1, \omega_1)+c_{HZ}(N_2, \omega_2)
 \end{equation}
(see \cite[Lemma 4.3, p. 43]{LU06} for a proof).
 This concludes the proof of Theorem \ref{main2}.
\end{proof}

\begin{remark}\rm
The upper bound  
\[c_G \left( M_1\times\dots\times   M_r,  \omega_{FS}^1\oplus\dots  \oplus \omega_{FS}^r\right)\leq \pi\]
obtained in the proof of Theorem \ref{main2}
can also be achieved by using the fact that HSSCT and their products  are uniruled  manifolds (see Definition 1.14,  Theorem 1.27 in 
\cite{LU06} and the remark following this theorem).
\end{remark}

\begin{remark}\rm
Note that in \cite[Theorem 1.16]{LU06}  another interesting result is proven namely formula (21).
Using the techniques developed so far one can prove the analogous of this formula, namely
\[C_{HZ}^{(2o)}(\times_{j=1}^rM_j,\oplus_{j=1}^r a_j\omega^j_{FS}; pt, \times_{j=1}^r\alpha_j)\leq (|a_1|+\cdots +|a_r|)\pi,\]
for all $a_j\in\R\setminus\{0\}$ and $\alpha_j=\alpha_j(M_j, \omega^j_{FS})$ or $\beta_j=\beta_j(M_j, \omega^j_{FS})$.
\end{remark}

\begin{remark}\rm\label{explanationnoupperbound}
We do not know if  the inequality 
\[c_{HZ} \left( M_1\times\dots\times   M_r, a_1 \omega_{FS}^1\oplus\dots  \oplus a_r\omega_{FS}^r\right)\leq (|a_1|+ \dots +|a_r|)\pi.\]
holds true.
Unfortunately, the proof  given by Lu  in the case of product of projective spaces  and 
\cite[Theorem 1.21]{LU06}) do not extend to the general case of HSSCT.
Indeed the Gromov--Witten invariant \linebreak $\Psi_{A, 0, m+2}(pt, pt, \beta_1, \dots , \beta_m)$ of  
$M = M_1\times\dots\times   M_r$
does not vanish  (for some homology classes $\beta_1, \dots , \beta_m$)
 if and only if   all the  $M_j$'s are   projective spaces, since it is easily checked that the dimension condition $\sum_{j=1}^m \deg(\beta_j) = 2(c_1(A) - \dim(M) - 1 + m)$, necessary for the Gromov-Witten invariant to be nonzero (\cite{MCSA94}, p. 11), is satisfied only in this case. 
The reader is also referred to  \cite[Corollary 1.19 and Example 1.20]{LU06})
for some comments and conjectures related to this problem.
 \end{remark}

\begin{proof}[Proof of Theorem \ref{main3}]
It follows from  (\ref{basiciC2}) and  (\ref{cG<C2}) in Lemma \ref{lemmasumm}  and by (\ref{C2oNM}) that 
\[c_G(N\times M, \omega\oplus a\omega_{FS})\leq  C_{HZ}^{(2o)}(N\times M, \omega\oplus a\omega_{FS}; pt, [N]\times\gamma)\leq |a|\pi,\]
where $\gamma=\alpha  {(M, \omega_{FS})}$ (or $\gamma=\beta  {(M, \omega_{FS})}$),
yielding the desired inequality (\ref{cGMprodHSSCT}).
\end{proof}

\begin{proof}[Proof of Theorem \ref{main4}]
 By 
\[(B^{2n}(1),\omega_0)\subset  (\Omega, \omega_0)\stackrel{W}{\rightarrow} (Z^{2n}(1), \omega_0),\]
(given by (\ref{ballintoomega})  and (\ref{omegaintocil}) respectively) and  the monotonicity and nontriviality of $c_{G}$ and $c_{HZ}$
we get $c_G(\Omega, \omega_0) =c_{HZ}(\Omega, \omega_0)= \pi$, namely
 (\ref{cGcartan}) and (\ref{cHZcartan}).
Analogously, let us denote $M_j$ the compact dual of $\Omega_j$: by (\ref{cGupboundprodHSSCT}) and by the symplectic  embedding
\[(\times_{j=1}^r\Omega_j, \oplus_{j=1}^ra_j \omega_{0}^j)\stackrel{\Phi_{\Omega_1}\times\cdots\times\Phi_{\Omega_r}}{\longrightarrow} (\times_{j=1}^rM_j, \oplus_{j=1}^ra_j \omega_{FS}^j)\]
induced by (\ref{symplecticdual})
one obtains  (\ref{cGupboundprodHSSNT}) which, together with the symplectic  embedding $\times_{j=1}^rB^{2n_j}(1)\subset \times_{j=1}^r\Omega_j$
 (induced by (\ref{ballintoomega})) 
and (\ref{ballintoball}) yields  (\ref{cGproductCartan}).
\end{proof}
In order to prove Theorem \ref{main5} we need the following  interesting  result of Lu.
\begin{lem}\label{lemmaludeep}
Let $(N ,\omega)$ be any closed  symplectic manifold.
Then, for any $r>0$ one has
\[c_{HZ}(N\times B^{2n}(r), \omega\oplus\omega_0)=c_{HZ}(N\times Z^{2n}(r), \omega\oplus\omega_0)=\pi r^2.\]
where $Z^{2n}(r)$ is given by (\ref{Zcil}).
\end{lem}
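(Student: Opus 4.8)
The plan is to reduce the equality to two inequalities and then squeeze. Since $B^{2n}(r)\subset Z^{2n}(r)$, the inclusion $N\times B^{2n}(r)\hookrightarrow N\times Z^{2n}(r)$ is a symplectic embedding, so by monotonicity of $c_{HZ}$ one has $c_{HZ}(N\times B^{2n}(r),\omega\oplus\omega_0)\le c_{HZ}(N\times Z^{2n}(r),\omega\oplus\omega_0)$, and it suffices to prove
\[
\pi r^2\le c_{HZ}\bigl(N\times B^{2n}(r),\omega\oplus\omega_0\bigr)
\qquad\text{and}\qquad
c_{HZ}\bigl(N\times Z^{2n}(r),\omega\oplus\omega_0\bigr)\le\pi r^2 .
\]
Granting these, both capacities are pinned to $\pi r^2$. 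The first inequality is elementary; the second is the substance, and is a theorem of Lu.

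For the lower bound I would recycle the Hamiltonians that compute $c_{HZ}(B^{2n}(r),\omega_0)=\pi r^2$, making them constant in the $N$-direction. Concretely, for a smooth profile $f\colon[0,r^2)\to[0,\infty)$ that equals $\max f$ near $0$, vanishes near $r^2$, satisfies $0\le f\le\max f$, and has small enough slope that the flow of $z\mapsto f(|z|^2)$ on $B^{2n}(r)$ has no non-constant $1$-periodic orbit — such $f$ exist with $\max f$ as close to $\pi r^2$ as we like — I would set $H(p,z):=f(|z|^2)$ on $N\times B^{2n}(r)$. Because $N$ is compact and $f$ has compact support in $[0,r^2)$, $H$ is admissible: it equals $\max f$ on the open set $N\times\{|z|<\delta\}$ and vanishes outside a compact set. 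Since $H$ does not depend on $p$, its Hamiltonian vector field is $(0,X_{f(|\cdot|^2)})$, so every periodic orbit of $H$ has the form $t\mapsto(p_0,\gamma(t))$ with $\gamma$ a periodic orbit of $f(|\cdot|^2)$ of the same period; hence $H$ has no fast non-constant periodic orbit and $c_{HZ}(N\times B^{2n}(r),\omega\oplus\omega_0)\ge\max f$. Letting $f$ vary gives $\pi r^2\le c_{HZ}(N\times B^{2n}(r),\omega\oplus\omega_0)$. (The same $H$, being supported inside $N\times B^{2n}(r)\subset N\times Z^{2n}(r)$, also yields the lower bound for the cylinder, though that is already implied by monotonicity.)

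For the upper bound $c_{HZ}(N\times Z^{2n}(r),\omega\oplus\omega_0)\le\pi r^2$ I would simply invoke Lu \cite{LU06}; let me indicate the mechanism. Writing $Z^{2n}(r)=B^2(r)\times\C^{n-1}$, an admissible Hamiltonian $H$ on $N\times Z^{2n}(r)$ has compact support, hence support inside $N\times B^2(r)\times B^{2n-2}(R)$ for some $R>0$. Embedding $B^2(r)$ symplectically into $\CP^1$ with a line of area $\pi r^2+\varepsilon$, and $B^{2n-2}(R)$ symplectically into $\CP^{n-1}$ with a suitable Fubini--Study form, one transplants $H$ to an admissible Hamiltonian $\widehat H$ on the \emph{closed} manifold $N\times\CP^1\times\CP^{n-1}$, supported away from the hyperplanes at infinity and with the same fast-orbit behaviour (non-constant fast orbits of $\widehat H$ are trapped in the image and are exactly those of $H$ under the embedding). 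One then bounds $\max H=\max\widehat H$ from above by a pseudo symplectic capacity $C_{HZ}^{(2o)}$ of $N\times\CP^1\times\CP^{n-1}$ for homology classes adapted to the compactification, and bounds that in turn, via Lemma \ref{C2GW}, by the corresponding $\GW_0$; the needed non-vanishing Gromov--Witten invariant is the one carried by the class of a line in the $\CP^1$-factor, produced by Lemma \ref{prodGromovWitten} from the elementary equality $\Psi^{\CP^1}_{[\CP^1],0,3}(pt;pt,pt,pt)=1$, and its $(\omega\oplus\omega_0)$-area is $\pi r^2+\varepsilon$. Hence $\max H\le\pi r^2+\varepsilon$; letting $\varepsilon\to0$ and taking the supremum over $H$ gives the upper bound. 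The main obstacle — and the reason I would cite \cite{LU06} rather than reprove it — is precisely this compactification step: arranging the closed model and the pair of homology classes so that the pseudo symplectic capacity simultaneously dominates $\max\widehat H$ and is controlled by the cheap line class. This is exactly what Lu's $(\alpha_0,\alpha_\infty)$-formalism is designed to handle, and once both displayed inequalities are established the lemma follows from the squeeze.
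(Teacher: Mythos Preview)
Your proposal is correct. The paper's own ``proof'' of this lemma is simply the one-line citation ``See \cite[Theorem 1.17, p.14]{LU06}'', so there is no argument to compare against; your write-up supplies the standard lower-bound construction and a sketch of Lu's upper-bound mechanism, but ultimately rests on the same cited result.
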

\begin{proof}
See \cite[Theorem 1.17, p.14]{LU06}.
\end{proof}
\begin{proof}[Proof of Theorem \ref{main5}]
By  $(B^{2n}(1),\omega_0)\subset  (\Omega, \omega_0)\stackrel{W}{\rightarrow} (Z^{2n}(1), \omega_0)$ 
one has the embeddings 
\[(N\times B^{2n}(1), \omega\oplus\omega_0)\subset (N\times \Omega, \omega\oplus\omega_0)\stackrel{id_N\times W}{\rightarrow} (N\times Z^{2n}(1), \omega\oplus\omega_0)\] 
 and so the desired   (\ref{cHZMprod}), i.e.
$c_{HZ}(N\times \Omega, \omega\oplus\omega_0)=\pi$,
follows by  Lemma \ref{lemmaludeep} and the monotonicity of $c_{HZ}$.
\end{proof}

\noindent
{\bf Final remarks on Seshadri constants}

\noindent
Our knowledge of the Gromov width  of  a HSSCT  allows us to obtain an upper bound of the Seshadri constant 
of an ample  line bundle   over a HSSCT $(M, \omega_{FS})$.
Recall that given a compact complex manifold $(N, J)$
and an holomorphic line bundle  $L\rightarrow N$
the \emph{Seshadri constant} of $L$ at a point $x\in N$ is defined as the nonnegative real number 
\[\epsilon (L, x)=\inf_{C\ni x}\frac{\int_Cc_1(L)}{\mult_xC}, \]
where the infimum is taken over all irreducible holomorphic curves $C$ passing through the point $x$
and $\mult_xC$ is the multiplicity of $C$ at $x$ (see \cite{DE} for details). 
The (global) Seshadri constant is defined by
\[\epsilon (L)=\inf_{x\in M}\epsilon (L, x).\]
Note that Seshadri's criterion for ampleness says that  $L$ is ample if and only if  $\epsilon (L)>0$.
P. Biran and K. Cieliebak \cite[Prop. 6.2.1]{BICI01} have  shown  that  
\[\epsilon (L)\leq c_G(M, \omega_L),\]
where $\omega_L$ is any  \K\ form 
which represents the first Chern class of $L$, i.e. $c_1(L)=[\omega_L]$.
Consider now  an irreducible  HSSCT $(M, \omega_{FS})$ and 
the line bundle  $L\rightarrow M$  such that 
$c_1(L)=[\frac{\omega_{FS}}{\pi}]$
($L$ can be taken as the pull-back via the Borel--Weil embedding (\ref{BW}) of the universal bundle of $\CP^N$).
Therefore, by using the upper bound $c_G(M, \omega_{FS})\leq\pi$ and the conformality of $c_G$ 
we get: 
\begin{cor}
Let $(M, \omega_{FS})$ be an irreducible HSSCT and let $L\rightarrow M$
as above. Then $\epsilon (L)\leq 1.$
\end{cor}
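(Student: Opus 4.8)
The plan is to combine two facts that are already available in the excerpt: the upper bound $c_G(M,\omega_{FS})\le\pi$ established in Theorem~\ref{main}, and the inequality $\epsilon(L)\le c_G(M,\omega_L)$ of Biran--Cieliebak, where $\omega_L$ is any \K\ form representing $c_1(L)$. So the whole argument is a one-line reduction once the normalizations are matched up correctly. First I would fix the line bundle $L\to M$ on the irreducible HSSCT $(M,\omega_{FS})$ to be the pull-back under the Borel--Weil embedding $BW:M\to\CP^N$ (see~\eqref{BW}) of the hyperplane (dual of the tautological) bundle $\mathcal{O}(1)$ of $\CP^N$, so that $c_1(L)=BW^*[\omega_{FS}^{\CP^N}]=[\frac{1}{\pi}\omega_{FS}]$ with our normalization $\omega_{FS}(A)=\pi$ for the generator $A\in H_2(M,\z)$; equivalently, $\int_A c_1(L)=1$. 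Such an $L$ exists and is ample since $\omega_{FS}/\pi$ is a positive integral $(1,1)$-class.

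Next I would take $\omega_L:=\frac{1}{\pi}\omega_{FS}$ as the \K\ representative of $c_1(L)$ and apply \cite[Prop.~6.2.1]{BICI01} to get $\epsilon(L)\le c_G(M,\omega_L)=c_G(M,\frac{1}{\pi}\omega_{FS})$. By the conformality property of the Gromov width, $c_G(M,\frac{1}{\pi}\omega_{FS})=\frac{1}{\pi}c_G(M,\omega_{FS})$, and by Theorem~\ref{main} the right-hand side equals $\frac{1}{\pi}\cdot\pi=1$. Hence $\epsilon(L)\le 1$, which is the claim. I would note that in fact the sharper statement $c_G(M,\omega_{FS})=\pi$ is not needed here — only the upper bound from Lemma~\ref{gromovwittenHSSCT} and the pseudo-capacity estimates — but invoking the full Theorem~\ref{main} is cleanest.

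I do not expect a genuine obstacle: the only thing to be careful about is the bookkeeping of normalizations, namely making sure that the \K\ form one feeds into the Biran--Cieliebak inequality is exactly a de Rham representative of $c_1(L)$ and not of $\pi\,c_1(L)$, and that the single-generator structure of $H_2(M,\z)$ for an irreducible HSSCT is what makes $\frac{1}{\pi}\omega_{FS}$ integral. Everything else is a direct citation. If one wanted the analogous statement for reducible HSSCT or for products, one would instead invoke the product computations in Theorem~\ref{main2}, but for the irreducible case as stated the argument above suffices.
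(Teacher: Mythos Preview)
Your proof is correct and follows essentially the same route as the paper: apply the Biran--Cieliebak inequality $\epsilon(L)\le c_G(M,\omega_L)$ with $\omega_L=\frac{1}{\pi}\omega_{FS}$, then use conformality of $c_G$ together with the upper bound $c_G(M,\omega_{FS})\le\pi$ from Theorem~\ref{main}. Your observation that only the upper bound (not the full equality) is needed is also exactly what the paper uses.
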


\end{document}